\def\mc{\multicolumn}
\def\fCenter{{\mbox{$\ \vdash\ $}}}
\newcommand{\fns}{\footnotesize}
\newcommand{\osigma}{\overline{\sigma}}
\renewcommand{\emph}{\textbf}
\newcommand{\Prop}{\mathsf{Prop}}
\newcommand{\NOMJ}{\mathbf{J}}
\newcommand{\NOMH}{\mathbf{H}}
\newcommand{\nomi}{\mathbf{i}}
\newcommand{\nomj}{\mathbf{j}}
\newcommand{\nomh}{\mathbf{h}}
\newcommand{\nomk}{\mathbf{k}}
\newcommand{\NCT}{\mathbf{T}}
\newcommand{\CNOMM}{\mathbf{M}}
\newcommand{\CNOMN}{\mathbf{N}}
\newcommand{\cnomm}{\mathbf{m}}
\newcommand{\cnomn}{\mathbf{n}}
\newcommand{\cnomo}{\mathbf{o}}
\renewcommand{\phi}{\varphi}
\newcommand{\wbox}{\ensuremath{\Box}\xspace}
\newcommand{\wdia}{\ensuremath{\Diamond}\xspace}
\newcommand{\bbox}{\ensuremath{\blacksquare}\xspace}
\newcommand{\bdia}{\ensuremath{\Diamondblack}\xspace}
\newcommand{\WBOX}{\ensuremath{\check{\Box}}\xspace}
\newcommand{\WDIA}{\ensuremath{\hat{\Diamond}}\xspace}
\newcommand{\BBOX}{\ensuremath{\check{\blacksquare}}\xspace}
\newcommand{\BDIA}{\ensuremath{\hat{\Diamondblack}}\xspace}
\newcommand{\aatop}{\ensuremath{\top}\xspace}
\newcommand{\abot}{\ensuremath{\bot}\xspace}
\newcommand{\aand}{\ensuremath{\wedge}\xspace}
\newcommand{\aor}{\ensuremath{\vee}\xspace}
\newcommand{\ATOP}{\hat{\top}}
\newcommand{\AATOP}{\hat{\top}}
\newcommand{\ABOT}{\ensuremath{\check{\bot}}\xspace}
\def\fCenter{\vdash}
\newcommand{\vd}{\ \,\textcolor{black}{\vdash}\ \,}
\newcommand{\ol}[1]{\overline{#1}}
\newcommand{\Rarr}{\Rightarrow}
\title{Labelled calculi for lattice-based modal logics}
\author{
Ineke van der Berg\inst{1,3}\orcidID{0000-0003-2220-1383}
\and
Andrea De Domenico\inst{1}\orcidID{0000-0002-8973-7011} 
\and
Giuseppe Greco\inst{1}\orcidID{0000-0002-4845-3821} 
\and
Krishna B.~Manoorkar\inst{1}\orcidID{0000-0003-3664-7757}
\and
Alessandra Palmigiano\inst{1,2}\orcidID{0000-0001-9656-7527}
\and
Mattia Panettiere\inst{1}\orcidID{0000-0002-9218-5449}
}
\authorrunning{van der Berg, De Domenico, Greco, Manoorkar, Palmigiano, Panettiere}
\institute{School of Business and Economics, Vrije Universiteit Amsterdam, the Netherlands 
\and
Department of Mathematics and Applied Mathematics, U.~of Johannesburg, South Africa
\and
Department of Mathematical Sciences, Stellenbosch University
\\
\email{\{i.van.der.berg,a.de.domenico,g.greco,k.b.manoorkar,\\a.palmigiano,m.panettiere\}@vu.nl}
}
\date{September 2022}
\begin{document}

\maketitle
\begin{abstract}
 We introduce  labelled sequent  calculi for the basic normal non-distri-butive modal logic $\mathbf{L}$ and 31 of its axiomatic extensions, where the labels  are atomic formulas of a first order language which is interpreted on the canonical extensions of the algebras in the variety corresponding to the logic $\mathbf{L}$. Modular proofs are presented that these calculi are all sound, complete and conservative w.r.t.~$\mathbf{L}$, and enjoy cut elimination and the subformula property. The introduction of these calculi showcases a  general methodology for introducing labelled calculi for the  class of LE-logics and their analytic axiomatic extensions in a principled and uniform way. 
\keywords{Non-distributive modal logic \and Algorithmic proof theory \and Algorithmic correspondence theory \and Labelled calculi.}
\end{abstract}
\section{Introduction}
The present paper pertains to a line of research in structural proof theory aimed at generating analytic calculi for wide classes of nonclassical logics in a principled and uniform way. Since the 1990s, semantic information about given logical frameworks has proven key to generate calculi with excellent properties \cite{Sim94}. The contribution of semantic information has been particularly perspicuous in the introduction of labelled calculi for e.g.~classical normal modal logic  \cite{negri2005proof} and intuitionistic logic \cite{Negri2012}, and their  axiomatic extensions defined by axioms for which first-order correspondents exist of a certain syntactic shape \cite{MASSA}. Moreover, recently, the underlying link between the principled and algorithmic generation of analytic rules for capturing axiomatic extensions of given logics and the systematic access to, and use of,  semantic information for this purpose has been established also in the context of other proof-theoretic formats, such as proper display calculi \cite{GMPTZ,ChnGrePalTzi21}, and relative to classes of logics as wide  as the normal (D)LE-logics, i.e.~those logics canonically associated with varieties of normal (distributive) lattice expansions \cite{CP-nondist} (cf.~Definition 1.1). In particular, in \cite{GMPTZ},  the same algorithm ALBA which computes the first-order correspondents of (analytic) inductive axioms in any (D)LE-signature was used to generate the analytic rules in a suitable proper display calculus corresponding to those axioms.

The algorithm ALBA \cite{CoPa12,CP-nondist} is among the main tools in unified correspondence theory \cite{CoGhPa14}, and allows not only for the mechanization  of well known correspondence arguments  from modal logic, but also for the uniform generalization of these arguments to (D)LE-logics, thanks to the fact that  the ALBA-computations are motivated by and interpreted in an algebraic environment in which the classic model-theoretic correspondence arguments can be rephrased in terms of the order-theoretic properties of the algebraic interpretations of the logical connectives. These properties guarantee the soundness of the rewriting rules applied in ALBA-computations, thanks to which, the first-order correspondent of a given input axiom (in any given LE-language $\mathcal{L}$) is generated in a language $\mathcal{L}^+$ expanding $\mathcal{L}$, which is interpreted in the canonical extensions of  $\mathcal{L}$-algebras.

In the present paper, we showcase how the methodology  adopted in \cite{GMPTZ} for introducing proper display calculi for (D)LE-logics and their analytic axiomatic extensions can be used also for endowing LE-logics with labelled calculi. Specifically, we focus on a particularly simple LE-logic, namely the basic normal non-distributive (i.e.~lattice-based) modal logic $\mathbf{L}$ \cite{conradie2017toward,conradie2016categories}, for which we  introduce a labelled calculus and show its basic properties, namely soundness, completeness, cut-elimination and subformula property. Moreover, we discuss, by way of examples, how ALBA can be used to generate analytic rules corresponding to (analytic inductive) axiomatic extensions of the basic logic $\mathbf{L}$.

\paragraph{Structure of the paper.} Section \ref{sec:preliminaries} recalls preliminaries on basic normal non-distributive logic, canonical extensions and the algorithm ALBA, Section \ref{sec:LabelledCalculusAL} presents a labelled calculus for normal non-distributive logic and its extensions. Section \ref{sec:PropertiesOfTheValculusAL} proves soundness, completeness, cut elimination and subformula property for basic normal non-distributive logic and some of its axiomatic extensions. Section \ref{ALIsAProperLabelledCalculus} shows that the all calculi introduced in the paper are proper labelled calculi. We conclude in Section \ref{sec:Conclusions}. In Appendix \ref{sec:ProperAlgebraicLabelledCalculi} we provide the formal definition of proper labelled calculi and we show that any calculus in this class enjoys the canonical cut elimination {\em \`a la} Belnap.

\section{Preliminaries}
\label{sec:preliminaries}

\subsection{Basic normal non-distributive modal logic, its associated ALBA-language, and some of its axiomatic extensions}
\label{ssec:Non-distributive modal logic}
The basic normal non-distributive modal logic is  a normal LE-logic (cf.~\cite{CP-nondist,conradie2020non}) which was used in \cite{conradie2016categories,conradie2017toward} as the underlying environment for an epistemic logic of categories and formal concepts, and in \cite{conradie2021rough} as the underlying environment of a logical theory unifying Formal Concept Analysis \cite{ganter2012formal} and Rough Set Theory \cite{pawlak1982rough}. 

Let $\Prop$ be a (countable or finite) set of atomic propositions. The language $\mathcal{L}$ is defined as follows:
\[
  \varphi \coloneqq \bot \mid \top \mid p \mid  \varphi \wedge \varphi \mid \varphi \vee \varphi \mid \Box \varphi \mid  \Diamond\varphi,  
\]

\noindent where $p\in \Prop$. 
The extended language $\mathcal{L}^+$, used in ALBA-computations taking inequalities of $\mathcal{L}$-terms in input, is defined as follows:
\[
  \psi \coloneqq \nomj\mid \cnomm \mid \varphi  \mid  \psi \wedge \psi \mid \psi \vee \psi \mid \wbox \psi \mid  \wdia\psi \mid \bbox \psi \mid  \bdia\psi,  
\]
where $\varphi\in \mathcal{L}$, and the variables $\nomj\in \mathsf{NOM}$ (resp.~$\cnomm\in \mathsf{CNOM}$), referred to as {\em nominals} (resp.~{\em co-nominals}), range over disjoint sets which are also disjoint from $\Prop$.
The {\em basic}, or {\em minimal normal} $\mathcal{L}$-{\em logic} is a set $\mathbf{L}$ of sequents $\phi\vdash\psi$,  with $\phi,\psi\in\mathcal{L}$, containing the following axioms:

{{\centering
\begin{tabular}{ccccccccccccc}
     $p \vdash p$ & \quad\quad & $\bot \vdash p$ & \quad\quad & $p \vdash p \vee q$ & \quad\quad & $p \wedge q \vdash p$ & \quad\quad & $\top \vdash \Box\top$ & \quad\quad & $\Box p \wedge \Box q \vdash \Box(p \wedge q)$
     \\
     & \quad & $p \vdash \top$ & \quad & $q \vdash p \vee q$ & \quad & $p \wedge q \vdash q$ &\quad &  $\Diamond\bot \vdash \bot$ & \quad & $\Diamond(p \vee q) \vdash \Diamond p \vee \Diamond q$
\end{tabular}
\par}}
\noindent and closed under the following inference rules:
		{\small{
		\begin{gather*}
			\frac{\phi\vdash \chi\quad \chi\vdash \psi}{\phi\vdash \psi}
			\quad
			\frac{\phi\vdash \psi}{\phi\left(\chi/p\right)\vdash\psi\left(\chi/p\right)}
			\quad
			\frac{\chi\vdash\phi\quad \chi\vdash\psi}{\chi\vdash \phi\wedge\psi}
			\quad
			\frac{\phi\vdash\chi\quad \psi\vdash\chi}{\phi\vee\psi\vdash\chi}
			\quad
			\frac{\phi\vdash\psi}{\Box \phi\vdash \Box \psi}
\quad
\frac{\phi\vdash\psi}{\Diamond \phi\vdash \Diamond \psi}
		\end{gather*}
		}}
An {\em $\mathcal{L}$-logic} is any extension of $\mathbf{L}$  with $\mathcal{L}$-axioms $\phi\vdash\psi$. In what follows, for any set $\Sigma$ of $\mathcal{L}$-axioms, we let $\mathbf{L}.\Sigma$ denote the axiomatic extension of $\mathbf{L}$ generated by $\Sigma$. Throughout the paper, we will consider all subsets $\Sigma$ of the set of axioms listed in the table below. These axioms are well known from classical  modal logic, and have also cropped up in  \cite{conradie2021rough} in the context of the definition of  relational structures  simultaneously generalizing Formal Concept Analysis and Rough Set Theory.
{\small
\begin{center}
\begin{tabular}{rccclclcccl}
\hline
(4)  && $\wdia \wdia A \vdash \wdia A$ && {\fns transitivity} &$\quad\quad$& (D) && $\wbox A \vdash \wdia A$ && {\fns seriality} \\
(T)  && $\wbox A \vdash A$ && {\fns reflexivity} && (C)  && $\wdia \wbox A \vdash \wbox \wdia A$ && {\fns confluence} \\
(B)  && $A \vdash \wbox \wdia A$ && {\fns symmetry} && \\

\hline
\end{tabular}
\end{center}
 }

\subsection{$\mathcal{L}$-algebras, their canonical extensions, and the algebraic interpretation of the extended language of ALBA}
\label{ssec:interpretation ALBA}
In the present section, we recall the definitions of the  normal lattice expansions canonically associated with the basic logic $\mathbf{L}$, their canonical extensions, the existence of which can be shown both constructively and non-constructively, and the interpretation of the extended language $\mathcal{L}^+$ in the canonical extensions of $\mathcal{L}$-algebras.

An {\em $\mathcal{L}$-algebra}  is a tuple $\mathbb{A} = (L,\wdia^\mathbb{A},\wbox^\mathbb{A})$, where $L$ is a bounded lattice, $\wdia^\mathbb{A}$ (resp.~$\wbox^\mathbb{A}$) is a finitely join-preserving (resp.~finitely meet-preserving) unary operation. That is, besides the usual identities defining general lattices, the following identities hold:
\[\wdia(x\aor y) = \wdia x\aor \wdia y\quad\quad \wdia \abot = \abot\quad\quad \wbox (x\aand y) = \wbox x\aand \wbox y\quad\quad \wbox \aatop = \aatop.\]

In what follows, we let $\mathsf{Alg}(\mathcal{L})$ denote the class of $\mathcal{L}$-algebras.
Let $L$ be a (bounded) sublattice of a complete lattice $L'$.
\begin{enumerate}
\item  $L$ is {\em dense} in $L'$ if every element of $L'$ can be expressed both as a join of meets and as a meet of joins of elements from $L$. We let $K(L')$ (resp.~$O(L')$) denote the meet-closure (resp.~join-closure) of $L$ in $L'$. That is, $K(L') = \{k\in L'\mid k = \bigwedge S$ for some $S\subseteq L\}$, and $O(L') = \{o\in L'\mid o = \bigvee T$ for some $T\subseteq L\}$.
\item $L$ is {\em compact} in $L'$ if, for all $S, T \subseteq L$, if $\bigwedge S\leq \bigvee T$ then $\bigwedge S'\leq \bigvee T'$ for some finite $S'\subseteq S$ and $T'\subseteq T$.
\item The {\em canonical extension} of a lattice $L$ is a complete lattice $L^\delta$ containing $L$
as a dense and compact sublattice. Elements in $K(L^\delta)$ (resp.~$O(L^\delta)$) are the {\em closed} (resp.~{\em open}) {\em elements} of $L^\delta$.
\end{enumerate}
As is well known (cf.~\cite{GH01}), the canonical extension of a lattice $L$ exists and is unique up to an isomorphism fixing $L$. The non-constructive proof of existence can be achieved via suitable dualities for lattices, while the constructive proof uses the MacNeille completion construction on a certain poset obtained from the families of proper lattice filters and ideals of the original lattice $L$ (cf.~\cite{GH01,DGP05} for details). In the latter case, the ensuing complete lattice $L^\delta$ can be shown to be {\em perfect}, i.e., to be  both completely join-generated by the set $J^\infty(L^\delta)\subseteq K(L^\delta)$ of the completely
join-irreducible elements of $L^\delta$, and completely meet-generated by the set $M^\infty(L^\delta)\subseteq O(L^\delta)$ of
the completely meet-irreducible elements of $L^\delta$.\footnote{For any complete lattice $L$, any $j\in L$ is completely join-irreducible if $j\neq \bot$ and for any $S\subseteq L$, if $j = \bigvee S$ then $j\in S$. Dually, any $m\in L$ is completely meet-irreducible if $m\neq \top$ and for any $S\subseteq L$, if $m = \bigwedge S$ then $m\in S$.}

For every unary, order-preserving operation $f : L\to L$, the $\sigma$-{\em extension} of $f$ is defined first on any $k\in K(L^\delta)$ and then on every $u\in L^\delta$ as follows:
$$f^\sigma(k):= \bigwedge\{ f(a)\mid a\in L\mbox{ and } k\leq a\} \quad \quad 
f^\sigma(u):= \bigvee\{ f^\sigma(k)\mid k\in K(L^\delta)\mbox{ and } k\leq u\}.$$
The $\pi$-{\em extension} of $f$ is defined first on every $o\in O(L^\delta)$, and then on every $u\in L^\delta$ as follows:
$$f^\pi(o):= \bigvee\{ f(a)\mid a\in L\mbox{ and } a\leq o\}\quad\quad 
f^\pi(u):= \bigwedge\{ f^\pi(o)\mid o\in O(L^\delta)\mbox{ and } u\leq o\}.$$

Defined as above, the $\sigma$- and $\pi$-extensions maps  are monotone, and coincide with $f$ on the elements of $\mathbb{A}$. Moreover,  the $\sigma$-extension (resp.~(resp.~$\pi$-extension) of a finitely join-preserving (resp.~finitely meet-preserving) map is  {\em completely} join-preserving (resp. {\em completely} meet-preserving). This justifies defining
the {\em canonical extension of an
$\mathcal{L}$-algebra} $\mathbb{A} = (L, \wbox, \wdia)$ as the   $\mathcal{L}$-algebra
$\mathbb{A}^\delta: = (L^\delta, \wbox^{\pi}, \wdia^{\sigma})$.
By construction, $\mathbb{A}$ is a subalgebra of $\mathbb{A}^\delta$ for any $\mathbb{A}\in \mathsf{Alg}(\mathcal{L})$. In fact, compared to arbitrary $\mathcal{L}$-algebras, $\mathbb{A}^\delta$ enjoys additional properties that make it a suitable semantic environment for the extended language $\mathcal{L}^+$ of Section \ref{ssec:Non-distributive modal logic}. Indeed, the lattice reduct of $\mathbb{A}^\delta$ is a {\em complete} lattice.  Together with the fact that the operations $\wdia^\sigma$ and $\wbox^\pi$ do not  preserve only {\em finite} joins and meets respectively, but  {\em arbitrary} joins and meets, this implies, by well known order-theoretic facts (cf.~\cite[Proposition 7.34]{LatticesOrder}),  that the right and left adjoint of $\wdia^\sigma$ and of $\wbox^\pi$ are well defined on $\mathbb{A}^\delta$, which we denote $\bbox^{\mathbb{A}^\delta}$ and $\bdia^{\mathbb{A}^\delta}$ respectively,\footnote{The unary operations $\bbox^{\mathbb{A}^\delta}$ and $\bdia^{\mathbb{A}^\delta}$ on $\mathbb{A}^\delta$ are the unique maps satisfying the  equivalences $\wdia^\sigma u\leq v$ iff $u\leq \bbox^{\mathbb{A}^\delta}v$ and $\bdia^{\mathbb{A}^\delta} u\leq v$ iff $u\leq \wbox^\pi v$ for all $u, v\in \mathbb{A}^\delta$. } and provide the interpretations of the corresponding logical connectives in $\mathcal{L}^+$. Moreover, by denseness, $\mathbb{A}^\delta$ is both completely join-generated by the elements in $K(L^\delta)$ and completely meet-generated by the elements in $O(L^\delta)$, and when considering the non-constructive proof, these families of generators can be further restricted to $J^{\infty}(L^\delta)$ and $M^{\infty}(L^\delta)$, respectively. These generating subsets provide the interpretation of the variables in $\mathsf{NOM}$ and $\mathsf{CNOM}$, respectively.
As is well known, for any set $\Sigma$ of $\mathcal{L}$-sequents, if $\mathsf{K}(\Sigma)= \{\mathbb{A}\in \mathsf{Alg}(\mathcal{L}) \mid \mathbb{A}\models \Sigma\}$ is closed under taking canonical extensions,\footnote{By the general theory of unified correspondence (cf.~\cite{CP-nondist}), this is the case of every subset $\Sigma$ of the set of axioms listed at the end of Section \ref{ssec:Non-distributive modal logic}.} then the axiomatic extension  $\mathbf{L}.\Sigma$ is complete w.r.t.~the subclass $\mathsf{K}^\delta (\Sigma)= \{\mathbb{A}^\delta\mid \mathbb{A}\in \mathsf{K}(\Sigma)\}$, because any non-theorem $\xi\vd \chi$ will be falsified in the Lindenbaum-Tarski algebra $\mathbb{A}$ of $\mathbf{L}.\Sigma$, which is an element of  $\mathsf{K}(\Sigma)$ by construction, and hence $\xi\vd \chi$ will be falsified under the same assignment in $\mathbb{A}^\delta$,  given that $\mathbb{A}$ is a subalgebra of $\mathbb{A}^\delta$.

\subsection{The algorithm ALBA}
\label{sec:TheAlgorithmALBA}
The algorithm ALBA is guaranteed to succeed on a large class of formulas, called (analytic) inductive axioms, and it can be used to automatically generate labelled calculi with good properties equivalently capturing the LE-logics axiomatized by means of those axioms. We refer the reader to  \cite[Section 6,8]{CP-nondist} for the proof of correctness and success in the general setting of LE-logics. In the present section, we informally illustrate how the algorithm ALBA works by means of examples, namely, we run ALBA on the modal axioms in $\Sigma =\{\wbox p \vd p, p \vd \wbox \wdia p, \wbox p \vd \wdia p, \wdia \wbox p \vd \wbox \wdia p\}$ computing their first-order correspondent, which, in turn, can be automatically transformed into an analytic structural rule of a labelled calculus equivalently capturing the axioms (see the table at the end of Section \ref{sec:LabelledCalculusAL}). In what follows,  $\mathbb{A}$ denotes an $\mathcal{L}$-algebra, and $\mathbb{A}^\delta$ denotes its canonical extension. We abuse notation and use the same symbol for the algebra and its domain. We recall that variables $\nomj, \nomh$ and $\nomk$ (resp.~$\cnomm$) range in the set of the complete join-generators (resp.~complete meet-generators) of $\mathbb{A}^\delta$.

The following chain of equivalences is sound on $\mathbb{A}^\delta$:
{\small
\vspace{-0.3 cm}
\begin{center}
\begin{tabular}{rcll}
&& $\forall p(\wdia \wdia p \leq  \wdia p)$\\
&  iff & $\forall p\forall \nomj  \forall \cnomm \, ((\nomj\leq p\ \&\ \wdia p\leq \cnomm)\Rarr \wdia\wdia \nomj\leq \cnomm)$  & \ \ \ join- and meet-generation, $\wdia$ c.~join-preserving \\
 & iff & $\forall \nomj  \forall \cnomm \, (\wdia \nomj \leq \cnomm \Rarr \wdia \wdia \nomj  \leq \cnomm)$ & \ \ \ Ackermann's lemma\\
& iff & $\forall \nomj \forall \nomh \forall \cnomm \, (\wdia \nomj \leq \cnomm \Rarr (\nomh \leq \wdia \nomj \Rarr \wdia \nomh \leq \cnomm))$ & \\
\end{tabular}
\end{center}
 }

Indeed, the first equivalence in the chain above is due to the fact that, since the variable $\nomj$ (resp.~$\cnomm$) ranges over a completely join-generating (resp.~completely meet-generating) subset  of $\mathbb{A}^\delta$, and $\wdia$ is completely join-preserving,  we can equivalently rewrite the initial inequality as follows: $\forall p(\bigvee\{\wdia\wdia \nomj\mid\nomj\leq p\}\leq \bigwedge \{\cnomm\mid \wdia p\leq \cnomm\})$, which yields the required equivalence by the definition of the least upper bound and the greatest lower bound of subsets of a poset. The second equivalence is an instance of the core rule of ALBA, which allows to eliminate the quantification over  proposition variables. As to the direction from bottom to top, by the monotonicity of $\wdia$, the inequalities $\nomj\leq p$ and $\wdia p\leq \cnomm$ immediately imply $\wdia\nomj\leq \wdia p\leq \cnomm$, from which the  required inequality $\wdia\wdia \nomj\leq \cnomm$ follows by assumption.  For the converse direction, for a given interpretations of $\nomj$ and $\cnomm$ such that $\wdia \nomj\leq \cnomm$, we let $p$ have the same interpretation as $\nomj$. Then this interpretation satisfies both  inequalities $\nomj\leq p$ and $\wdia p = \wdia \nomj\leq \cnomm$, from which the  required inequality $\wdia\wdia \nomj\leq \cnomm$ follows by assumption. The third equivalence immediately follows from considerations similar to those made for justifying the first equivalence; namely, that the inequality $\wdia\wdia \nomj\leq \cnomm$ can be equivalently rewritten as $\bigvee\{\wdia \nomh\mid \nomh\leq \wdia \nomj\}\leq \cnomm$, which yields the required equivalence by the definition of a subset of a poset. Analogous arguments can be made to justify  the following chains of equivalences:
{\small
\begin{center}
    \begin{tabular}{rcll}
    && $\forall p(\wbox  p \leq   p)$\\
&  iff & $\forall p\forall \nomj  \forall \cnomm \, ((\nomj\leq \wbox p\ \&\  p\leq \cnomm)\Rarr  \nomj\leq \cnomm)$  & \ \ \ join- and meet-generation \\   
&  iff & $\forall \nomj  \forall \cnomm \, (\nomj\leq \wbox  \cnomm\Rarr  \nomj\leq \cnomm)  $ &  \ \ \ Ackermann's lemma\\
    \end{tabular}
\end{center}

\begin{center}
    \begin{tabular}{rcll}
    && $\forall p(  p \leq \wbox \wdia p)$\\
&  iff & $\forall p\forall \nomj  \forall \cnomm \, ((\nomj\leq  p\ \&\  \wdia p\leq \cnomm)\Rarr  \nomj\leq \wbox \cnomm)$  &  \ \ \ join- and meet-generation \\   
&  iff & $\forall \nomj  \forall \cnomm \, (\wdia \nomj\leq  \cnomm\Rarr  \nomj\leq \wbox \cnomm)  $ &  \ \ \ Ackermann's lemma\\
    \end{tabular}
\end{center}

\begin{center}
    \begin{tabular}{rcll}
    && $\forall p(\wbox  p \leq  \wdia p)$\\
&  iff & $\forall p\forall \nomj  \forall \cnomm \, ((\nomj\leq \wbox p\ \&\  \wdia p\leq \cnomm)\Rarr  \nomj\leq \cnomm)$  &  \ \ \ join- and meet-generation \\ 
&  iff & $\forall p\forall \nomj  \forall \cnomm \, ((\bdia\nomj\leq  p\ \&\  \wdia p\leq \cnomm)\Rarr  \nomj\leq \cnomm)$  &  \ \ \ $\bdia\dashv \wbox$ adjunction \\ 
&  iff & $\forall \nomj  \forall \cnomm \, (\wdia \bdia\nomj\leq \cnomm\Rarr  \nomj\leq \cnomm)  $ &  \ \ \ Ackermann's lemma\\
&  iff & $\forall \nomj  \forall \cnomm \exists \nomk\, ((\nomk\leq  \bdia\nomj\Rarr \wdia \nomk\leq \cnomm)\Rarr \nomj\leq \cnomm)  $ &  \ \ \ join-generation\\
    \end{tabular}
\end{center}

\begin{center}
    \begin{tabular}{rcll}
    && $\forall p(\wdia\wbox  p \leq  \wbox\wdia p)$\\
&  iff & $\forall p\forall \nomj  \forall \cnomm \, ((\nomj\leq \wbox p\ \&\  \wdia p\leq \cnomm)\Rarr  \wdia\nomj\leq \wbox\cnomm)$  &  \ \ \ join- and meet-generation \\ 
&  iff & $\forall p\forall \nomj  \forall \cnomm \, ((\bdia\nomj\leq  p\ \&\  \wdia p\leq \cnomm)\Rarr  \bdia\wdia\nomj\leq \cnomm)$  &  \ \ \ $\bdia\dashv \wbox$ adjunction \\ 
&  iff & $\forall \nomj  \forall \cnomm \, (\wdia \bdia\nomj\leq \cnomm\Rarr  \bdia\wdia\nomj\leq \cnomm)  $ &  \ \ \ Ackermann's lemma\\
&  iff & $\forall \nomj  \forall \cnomm \forall \nomh\exists \nomk\, ((\nomk\leq  \bdia\nomj\Rarr \wdia \nomk\leq \cnomm)\Rarr (\nomh\leq \wdia\nomj\Rarr \bdia\nomh\leq \cnomm))  $ &  \ \ \ join- and meet-generation\\
    \end{tabular}
\end{center}
 }
 
The second equivalence in the chain above is based on the existence of the adjoints of the maps interpreting the original connectives on canonical extensions of $\mathcal{L}$-algebras (cf.~Section \ref{ssec:interpretation ALBA}). 
Finally, we remark that
carrying out the correspondence arguments above in the algebraic environment of the canonical extensions of $\mathcal{L}$-algebras allows us to clearly identify their pivotal properties, and, in particular, to verify that no property related with the setting of (perfect) distributive lattices (viz.~the complete join-primeness of the elements interpreting nominal variables) is required.

\section{The labelled calculus A.L and some of its extensions}
\label{sec:LabelledCalculusAL}


In what follows, we use  $p, q, \ldots$ for proposition variables, $A, B, \ldots$ for {\em formulas} metavariables (in the original language of the logic), $\nomj, \nomi, \ldots$ for nominal variables, $\cnomm, \cnomn, \ldots$ for conominal variables, $\NOMJ, \NOMH, \ldots$ (resp.~$\CNOMM, \CNOMN, \ldots$) for {\em nominal terms} metavariables (resp. {\em conominal terms}), $\NCT$ for {\em terms} metavariables, and $\Gamma, \Delta, \ldots$ for {\em meta-structures} metavariables. Given $p \in \mathsf{Prop}$, $\nomj \in \mathsf{NOM}$, $\cnomm \in \mathsf{CNOM}$, the language of (labelled) formulas, terms and structures is defined as follows:
\begin{center}
\begin{tabular}{rcccl}
$\textrm{formulas}$ & \ $\ni$\  & $A$ & $\ ::=\ $ & $p \mid \aatop \mid \abot \mid A \aand A \mid A \aor A \mid \wbox A \mid \wdia A$ \\
$\textrm{nominal terms}$   & \ $\ni$\  & $\NOMJ$  & $\ ::=\ $ & $\nomj \mid \wdia \nomj  \mid \bdia \nomj$ \\
$\textrm{conominal terms}$ & \ $\ni$\  & $\CNOMM$ & $\ ::=\ $ & $\cnomm \mid \wbox \cnomm \mid \bbox \cnomm$ \\
$\textrm{terms}$ & \ $\ni$\  & $\NCT$ & $\ ::=\ $ & $\NOMJ \mid \CNOMM$ \\
$\textrm{labelled formulas}$ & \ $\ni$\  & $a$ & $\ ::=\ $ & $\nomj \leq A \mid A \leq \cnomm$ \\
$\textrm{pure structures}$ & \ $\ni$\  & $t^{\ast}$ & $\ ::=\ $ & $\nomj \leq \NCT \mid \NCT \leq \cnomm$ \\
$\textrm{structures}$ & \ $\ni$\  & $\sigma$ & $\ ::=\ $ & $a \mid t$ \\ 
$\textrm{meta-structures}$ & \ $\ni$\  & $\Gamma$ & $\ ::=\ $ & $\sigma \mid \Gamma, \Gamma$ \\
\mc{5}{c}{\rule[0mm]{0mm}{6mm} \fns ${}^\ast$Side condition: $\nomj$ and $\cnomm$ do not occur in $\NCT$.} \\ 
\end{tabular}
\end{center}

Let us first recall some terminology (see e.g.~\cite[Section 4.1]{Wa98}) and notation. A ($\mathbf{A.L}$-){\em sequent} is a pair $\Gamma \fCenter \Delta$ where $\Gamma$ and $ \Delta$ (the {\em antecedent} and the {\em consequent} of the sequent, respectively) are metavariables for meta-structures separated by commas. An {\em inference} $r$, also called an instance of a rule, is a pair $(S, s)$ of a (possibly empty) set of sequents $S$ (the premises) and a sequent $s$ (the conclusion). We identify a {\em rule} $R$ with the set of all instances that are instantiations of $R$. A {\em rule} $R$, also referred to as a {\em scheme} is usually presented schematically using metavariables for meta-structures (denoted by upper-case Greek letters $\Gamma, \Delta, \Pi, \Sigma, \ldots, \Gamma_1, \Gamma_2, \ldots$), or metavariables for structures (denoted by lower-case Latin letters: $a, b, c, \ldots, a_1, a_2, \ldots$ for labelled formulas and $t_1, t_2, t_3, \ldots$ for pure structures), or metavariables for formulas (denoted by $A, B, C, \ldots A_1, A_2, \ldots$,  or metavariables for terms (denoted by $\nomj, \nomi, \nomh, \ldots, \nomj_1, \nomj_2, \ldots$ for nominal terms and $\cnomm, \cnomn, \cnomo, \ldots, \cnomm_1, \cnomm_2, \ldots$ for conominal terms). A rule $R$ with no premises, i.e.~$S = \emptyset$, is called an {\em axiom scheme}, and an instantiation of such $R$ is called an {\em axiom}. The immediate subformulas of a principal formula (see Definition \ref{def:ParametersCongruenceHistoryTree}) in the premise(s) of an operational inference are called {\em auxiliary formulas}. The formulas that are not preserved in an inference instantiating the cut rule are called {\em cut formulas}. If the cut formulas are principal in an inference instantiating the cut rule, then the inference is called {\em principal cut}. A cut that is not principal is called {parametric}. A {\em proof} of (the instantiation of) a sequent $\Gamma \fCenter \Delta$ is a tree where (the instantiation of) $\Gamma \fCenter \Delta$ occurs as the end-sequent, all the leaves are (instantiations of) axioms, and each node is introduced via an inference. Before providing the list of the primitive rules of $\mathbf{A.L}$, we need two preliminary definitions. 

\begin{definition}[Analysis]
\label{def:ParametersCongruenceHistoryTree}
The {\em specifications} are instantiations of meta-structure meta-variables in the statement of $R$. The {\em parameters} of $r \in R$ are {substructures} of instantiations of \mbox{(meta-)}structure metavariables in the statement of $R$. A formula instance is {\em principal} in an inference $r \in R$ if it is not a parameter in the conclusion of $r$ (except for switch rules). 

(Meta-)Structure occurrences in an inference $r \in R$ are in the (symmetric) relation of {\em local congruence} in $r$ if they instantiate the same metavariable occurring in the same position in a premise and in the conclusion of $R$, or they instantiate nonparametric structures in the application of switch rules (namely, in the case of $\mathbf{A.L}\Sigma$, occurrences of labelled formulas $\nomj\leq A$ and $A\leq \cnomm$, or occurrences of pure structures $\nomj\leq \NCT$ and $\NCT\leq \cnomm$). Therefore, the local congruence is a relation between specifications. 

Two occurrences instatiating a (meta-)structure are in the {\em inference congruence relation} if they are locally congruent in an inference $r$ occurring in a proof $\pi$. The {\em proof congruent relation} is the transitive closure of the {\em inference congruence relation} in a derivation $\pi$. 
\end{definition}

\begin{definition}[Position]
\label{def:Position}
For any well-formed sequent $\Gamma \vd \Delta$, 
\begin{itemize}
\item The occurrence of a labelled formula $\nomj \leq A$ (resp.~$A \leq \cnomm$) is in {\em precedent} position if  $\nomj \leq A  \in \Gamma$ (resp.~$A \leq \cnomm \in \Delta$), and it is in {\em succedent} position if $\nomj \leq A  \in \Delta$ (resp.~$A \leq \cnomm \in \Gamma$);
\item  any occurrence of a pure structure $\nomj \leq \NCT$ in $\Gamma$ (resp.~$\Delta$) is in {\em precedent} (resp.~{\em succedent}) position; any occurrence of a pure structure $ \NCT\leq \cnomm$ in $\Gamma$ (resp.~$\Delta$) is in {\em succedent} (resp.~{\em precedent}) position.  
\end{itemize}
\end{definition}

We follow the notational conventions as stated in Definition \ref{def:ParametersCongruenceHistoryTree}, which provides the so-called {\em analysis} of the rules of any proper labelled calculus. In particular, according to Definition \ref{def:ParametersCongruenceHistoryTree}, notice that if an occurrence $\sigma$ is a substructure of $\Pi \in \{\Gamma, \Gamma', \Delta, \Delta'\}$ occurring in an instantiation $r$ of a rule $R \in \mathbf{A.L}$ (including axioms, namely rules with no premises), then $\sigma$ is a {\em parameter} of $r$ and every other $\sigma'$ is {\em nonparametric} in $r$;\footnote{Therefore, every instantiation of a labelled formula (resp.~a pure structure) occurring in $R \in \mathbf{A.L}$ is nonparametric. For instance, $\nomj \leq p$ is nonparametric in Id$_{\nomj \leq p}$, and $A \leq \cnomm, \nomj \leq \wbox A, \nomj \leq \wbox \cnomm$ are all nonparametric in $\wbox_P$. Moreover, according to Definition \ref{def:ParametersCongruenceHistoryTree}, every instantiation of a structure (resp.~a labelled formula) in the conclusion of initial rules (resp.~logical rules) is {\em principal}. For instance, $\nomj \leq p$ is principal in Id$_{\nomj \leq p}$ and $\nomj \leq \wbox A$ is principal in $\wbox_P$.}  moreover, if $\sigma$ occurs in a premise and in the conclusion of $r$ in the same position (namely, in {\em precedent} versus in {\em succedent position}: see Definition \ref{def:Position}), then these two occurrences of $\sigma$ are {\em locally congruent} in $r$.\footnote{For instance, given an instantiation $r$ of the rule $\aand_S$, assuming $\sigma \in \Gamma$ in the first (resp.~second) premise of $r$, then it occurs in the same position in the conclusion of $r$, and these two occurrences of $\sigma$ are {\em locally congruent} in $r$. Nonetheless, notice that the two occurrences of $\sigma$ in the premises of $r$ are not locally congruent in $r$.} Notice that in the display calculi literature `being locally congruent' usually presupposes `being parametric', but in labelled calculi this is not anymore the case due to the presence of switch rules (see Remark \ref{AnalysisOfSwitchRules}).

{\small
\vspace{-0.7 cm}
\begin{center}
\begin{tabular}{rl}
\mc{2}{c}{\rule[-1.85mm]{0mm}{8mm}\textbf{Initial rules$^\ast$}} \\
\AXC{$ \ $}
\LL{Id$_{\,\nomj \leq p}$}
\UIC{$\nomj \leq p \vd \nomj \leq p$}
\DP
 \ \,
\AXC{$\ $}
\RL{Id$_{\,p \leq \cnomm}$}
\UIC{$p \leq \cnomm \vd p \leq \cnomm$}
\DP 

\  & \ 
 
\AXC{ \ }
\LL{\fns Id$_{\bot}$}
\UIC{$\bot \leq \cnomm \vd \bot \leq \cnomm$}
\DP
 \ \,
\AXC{ \ }
\RL{\fns Id$_{\top}$}
\UIC{$\nomj \leq \top \vd \nomj \leq \top$}
\DP
 \\

%
\end{tabular}
\end{center}
}

The initial rules above encode identities for atomic propositions and zeroary connectives, namely the fact that the derivability relation $\vd$ is reflexive. Identity sequents of the form $\nomj \leq A \vd \nomj \leq A$ (resp.~$A \leq \cnomm \vd A \leq \cnomm$) are derivable in the calculus.



{\small
\vspace{-0.2 cm}
\begin{center}
\begin{tabular}{rl}
\mc{2}{c}{\rule[-1.85mm]{0mm}{8mm}\textbf{Initial rules for $\aatop$ and $\abot$$^\ast$}} \\
\AXC{ \ }
\LL{\fns $\abot_{\nomj}$}
\UIC{$\nomj \leq \abot \vd \nomj \leq A$}
\DP
 \ \,
\AXC{ \ }
\LL{\fns $\abot_{\cnomm}$}
\UIC{$B \leq \cnomm \vd \abot \leq \cnomm$}
\DP
\ & \
\AXC{ \ }
\RL{\fns $\aatop_{\cnomm}$}
\UIC{$\aatop \leq \cnomm \vd B \leq \cnomm$}
\DP
 \ \,
\AXC{ \ }
\RL{\fns $\aatop_{\nomj}$}
\UIC{$\nomj \leq A \vd \nomj \leq \aatop$}
\DP
 \\
\mc{2}{c}{\rule[0mm]{0mm}{6mm} \fns ${}^\ast$Side condition: $A \in \{p, A_1 \aand A_2, \wbox A_1\}$ and $B \in \{p, B_1 \aor B_2, \wdia B_1\}$} \\
\end{tabular}
\end{center}
}

The initial rules for $\abot$ (resp.~$\aatop$) above encodes the fact that $\abot$ is interpreted as the minimal element (resp.~$\aatop$ as the maximal element) in the algebraic interpretation. 

The cut rules below encode the fact that the derivability relation $\vd$ is transitive. Notice that the notion of `cut formula' in standard Gentzen sequent calculi corresponds to `labelled cut formula' in the present setting. Before defining the cut rules, we need the following definition.

\begin{definition}
\label{def:j-m-LabelledFormulas}
A labelled formula $a$ is a $\nomj$-labelled (resp.~$\cnomm$-labelled) formula in a derivation $\pi$ if the uppermost labelled formulas congruent with $a$ in $\pi$ are introduced via $\textrm{Id}_{\nomj\leq p}, \abot_{\nomj}, \aatop_{\nomj}, \aand_P, \aand_S, \wbox_P, \wbox_S$ (resp.~$\textrm{Id}_{p \leq \cnomm}, \abot_{\cnomm}, \aatop_{\cnomm}, \aor_P, \aor_S, \wdia_P, \wdia_S$). 
\end{definition}

{\small
\vspace{-0.3 cm}
\begin{center}
\begin{tabular}{rl}
\mc{2}{c}{\rule[-1.85mm]{0mm}{8mm}\textbf{Cut rules}$^\ast$} \\
\AXC{$\Gamma \vd \nomj \leq A, \Delta$}
\AXC{$\Gamma', \nomj \leq A \vd \Delta'$}
\LL{Cut$_{\,\nomj\leq A}$}
\BIC{$\Gamma, \Gamma' \vd \Delta, \Delta'$}
\DP
 \ & \  
\AXC{$\Gamma \vd B \leq \cnomm, \Delta$}
\AXC{$\Gamma', B \leq \cnomm \vd \Delta'$}
\RL{Cut$_{\,B\leq \cnomm}$}
\BIC{$\Gamma, \Gamma' \vd \Delta, \Delta'$}
\DP 
 \\
\mc{2}{c}{\rule[0mm]{0mm}{6mm} \fns ${}^\ast$Side condition: $\nomj \leq A$ and $B \leq \cnomm$ are in display,} \\
\mc{2}{c}{$\nomj \leq A$ is a $\nomj$-labelled formula and $B \leq \cnomm$ is an $\cnomm$-labelled formula.} \\ 
\end{tabular}
\end{center}
 }


The switch rules below encode elementary properties of pairs of inequalities with the same approximant (either a nominal $\nomj$ or a conominal $\cnomm$) occurring in the same sequent with opposite polarity (namely, the first in precedent position and the second in succedent position: see Definition \ref{def:Position}). Notice that we might use $S$ as a generic name denoting a specific switch rule in the following set. If so, we rely on the context to disambiguate which rule we are referring to. In particular, the label $S$ is unambiguous whenever we use it as the name for a rule application in a derivation.

{\small
\begin{center}
\begin{tabular}{rl}
\mc{2}{c}{\textbf{Switch rules$^\ast$}}  \\
\AXC{$\Gamma,  \nomj \leq A  \vd  \nomj  \leq \cnomm,  \Delta$}
\LL{\fns S$\cnomm$}
\UIC{$\Gamma \vd A  \leq \cnomm ,  \Delta$}
\DP 
 \ & \ 
\AXC{$\Gamma, A \leq \cnomm \vd  \nomj  \leq \cnomm,  \Delta$}
\RL{\fns S$\nomj$}
\UIC{$\Gamma \vd \nomj \leq A,  \Delta$}
\DP 
 \\

 & \\

\AXC{$\Gamma, \nomj \leq  A \vd \nomj \leq B, \Delta$}
\LL{S$\cnomm\cnomm$}
\UIC{$\Gamma, B \leq \cnomm \vd A \leq  \cnomm, \Delta$}
\DP
 \ & \ 
\AXC{$\Gamma, A \leq \cnomm \vd B \leq  \cnomm, \Delta$}
\RL{S$\nomj\nomj$}
\UIC{$\Gamma, \nomj \leq  B \vd \nomj \leq A, \Delta$}
\DP
 \\

 & \\

\AXC{$\Gamma,\nomj \leq \NCT \vd \nomj \leq A, \Delta$}
\LL{S$\cnomm\NCT$}
\UIC{$\Gamma,A \leq \cnomm \vd \NCT \leq \cnomm, \Delta$}
\DP

 \ & \  

\AXC{$\Gamma, \NCT \leq \cnomm \vd  A \leq \cnomm, \Delta$}
\RL{S$\nomj\NCT$}
\UIC{$\Gamma, \nomj \leq A \vd \nomj \leq \NCT, \Delta$}
\DP

 \\

 & \\
  \AXC{$\Gamma,\nomj \leq A \vd \nomj \leq \NCT, \Delta$}
\LL{S$\NCT\cnomm$}
\UIC{$\Gamma, \NCT \leq \cnomm \vd A \leq \cnomm, \Delta$}
\DP

 \ & \  

\AXC{$\Gamma, A \leq \cnomm \vd  \NCT \leq \cnomm, \Delta$}
\RL{S$\NCT\nomj$}
\UIC{$\Gamma, \nomj \leq \NCT \vd \nomj \leq A, \Delta$}
\DP
 
 \\
 &\\
  \AXC{$\Gamma,\nomj \leq \NCT' \vd \nomj \leq \NCT, \Delta$}
\LL{S$\NCT\NCT'\cnomm$}
\UIC{$\Gamma, \NCT \leq \cnomm \vd \NCT' \leq \cnomm, \Delta$}
\DP

 \ & \  

\AXC{$\Gamma, \NCT' \leq \cnomm \vd  \NCT \leq \cnomm, \Delta$}
\RL{S$\nomj\NCT\NCT'$}
\UIC{$\Gamma, \nomj \leq \NCT \vd \nomj \leq \NCT', \Delta$}
\DP
 \\

 





\mc{2}{c}{\rule[0mm]{0mm}{6mm} ${ }^\ast$Side condition: \fns{For all the switch rules except S$\cnomm$ and S$\nomj$,}} \\
\mc{2}{c}{\fns {$\nomj$ and $\cnomm$ do not appear in $\Gamma$ or $\Delta$. $\nomj$ (resp.~$\cnomm$) in S$\cnomm$ (resp.~S$\nomj$)}}\\
\mc{2}{c}{\fns {must not appear in the conclusion of the rule.}}
\end{tabular}
\end{center}
 }

\begin{remark}[Analysis of switch rules]
\label{AnalysisOfSwitchRules}
For each instantiation $r$ of $R \in \{\textrm{S}\NCT\NCT'\cnomm, \textrm{S}\nomj\NCT\NCT'\}$, the instantiations of $\nomj \leq \NCT''$ and $\NCT'' \leq \cnomm$ (where $\NCT'' \in \{\NCT, \NCT'\}$) are {\em nonparametric} and {\em locally congruent} in $r$ (see Definition \ref{def:ParametersCongruenceHistoryTree}). For each instantiation $r$ of any other switch rule $R$, the instantiations of $\nomj \leq C$ and $C \leq \cnomm$ (where $C \in \{A, B\}$) are {\em nonparametric} and {\em locally congruent} in $r$ (see Definition \ref{def:ParametersCongruenceHistoryTree}).
\end{remark}

%
%
%

{\small
\begin{center}
\begin{tabular}{rl}

\mc{2}{c}{\rule[-1.85mm]{0mm}{8mm} \textbf{Adjunction rules}\rule[-1.85mm]{0mm}{8mm}} \\

\AXC{$\Gamma \vd \wdia \nomj  \leq \cnomm,  \Delta$}
\LL{\fns $\wdia \dashv \bbox$}
\UIC{$\Gamma \vd \nomj \leq \bbox \cnomm,  \Delta$}
\DP 
 \ & \  
\AXC{$\Gamma \vd \nomj \leq \bbox \cnomm,  \Delta$}
\RL{\fns $\wdia \dashv \bbox^{-1}$}
\UIC{$\Gamma \vd \wdia \nomj \leq\cnomm,  \Delta$}
\DP
 \\

 & \\
 
\AXC{$\Gamma \vd \nomj \leq \wbox \cnomm,  \Delta$}
\LL{\fns $\bdia \dashv \wbox$}
\UIC{$\Gamma \vd \bdia \nomj \leq \cnomm,  \Delta$}
\DP 
 \ & \  
\AXC{$\Gamma \vd \bdia \nomj \leq \cnomm,  \Delta$}
\RL{\fns $\bdia \dashv \wbox^{-1}$}
\UIC{$\Gamma \vd  \nomj \leq \wbox \cnomm,  \Delta$}
\DP
 \\
\end{tabular}
\end{center}
 }

{\small
\begin{center}
\begin{tabular}{rl}
\mc{2}{c}{\rule[-1.85mm]{0mm}{8mm} \textbf{Structural rules for $\aatop$ and $\abot$}\rule[-1.85mm]{0mm}{8mm}} \\
%

\AXC{$\Gamma \vd \aatop \leq \cnomm, \Delta$}
\LL{\fns $\aatop_\wbox$}
\UIC{$\Gamma, \nomj \leq \aatop \vd \nomj \leq \wbox \cnomm, \Delta$}
\DP
 \ & \ 
\AXC{$\Gamma \vd \nomj \leq \abot, \Delta$}
\RL{\fns $\abot_\wdia$}
\UIC{$\Gamma, \abot \leq \cnomm \vd \wdia \nomj \leq \cnomm, \Delta$}
\DP
 \\
\end{tabular}
\end{center}
 }

The adjunction rules above encode the fact that unary modalities $\wdia, \bbox$ and $\bdia, \wbox$ constitute pairs of adjoint  operators. The structural rules $\aatop_\wbox$ (resp.~$\abot_\wdia$) above encodes the fact that $\wbox$ preserve $\aatop$ (resp.~$\wdia$ preserves $\abot$). 

The logical rules below encode the minimal order-theoretic properties and the arity of propositional
and modal connectives.


{\small
\begin{center}
\begin{tabular}{rl}

\mc{2}{c}{\rule[-1.85mm]{0mm}{8mm} \textbf{Logical rules for propositional connectives$^{\ast}$}\rule[-1.85mm]{0mm}{8mm}} \\

\AXC{$\Gamma, \nomj \leq  A_i \vd \Delta$}
\LL{$\aand_P$}
\UIC{$\Gamma, \nomj \leq A_1 \aand A_2 \vd \Delta$}
\DP
 \ & \ 
\AXC{$\Gamma \vd \nomj \leq A, \Delta$}
\AXC{$\Gamma \vd \nomj \leq B, \Delta$}
\RL{$\aand_S$}
\BIC{$\Gamma \vd \nomj \leq A \aand B, \Delta$}
\DP
 \\

 & \\
 
\AXC{$\Gamma \vd A \leq \cnomm, \Delta$}
\AXC{$\Gamma \vd B \leq \cnomm, \Delta$}
\LL{$\aor_P$}
\BIC{$\Gamma \vd A \aor B \leq \cnomm, \Delta$}
\DP
 \ & \  
\AXC{$\Gamma, A_i \leq \cnomm \vd \Delta$}
\RL{$\aor_S$}
\UIC{$\Gamma, A_1 \aor A_2 \leq \cnomm \vd \Delta$}
\DP
 \\
\mc{2}{c}{\rule[0mm]{0mm}{6mm} \fns ${}^\ast$Side condition: labelled formula in the conclusion of any logical rule are in display.} \\
\end{tabular}
\end{center}
 }

We consider the following logical rules for modalities, where $\wbox_S$ and $\wdia_P$ are invertible, but $\wbox_P$ and $\wdia_S$ are not.  This choice facilitates a smoother analysis of the rules and therefore it is preferable whenever the goal is to provide a canonical cut elimination. 

{\small
\begin{center}
\begin{tabular}{rl}

\mc{2}{c}{\rule[-1.85mm]{0mm}{8mm} \textbf{Logical rules for modalities$^{\ast}$}\rule[-1.85mm]{0mm}{8mm}} \\

\AXC{$\Gamma \vd A \leq \cnomm, \Delta$}
\LL{$\wbox_P$}
\UIC{$\Gamma, \nomj \leq \wbox A \vd \nomj \leq \wbox \cnomm, \Delta$}
\DP 
 \ & \  
\AXC{$\Gamma, A \leq \cnomm \vd \nomj \leq \wbox \cnomm, \Delta$}
\RL{$\wbox_S$}
\UIC{$\Gamma \vd \nomj \leq \wbox A, \Delta$}
\DP
 \\

 & \\

\AXC{$\Gamma, \nomj \leq A \vd \wdia \nomj \leq \cnomm, \Delta$}
\LL{$\wdia_P$}
\UIC{$\Gamma\vd \wdia A \leq \cnomm, \Delta$}
\DP
 \ & \ 
\AXC{$\Gamma \vd \nomj \leq A, \Delta$}
\RL{$\wdia_S$}
\UIC{$\Gamma, \wdia A \leq \cnomm \vd \wdia \nomj \leq \cnomm, \Delta$}
\DP  
 \\
\end{tabular}
\begin{tabular}{rl}
\mc{2}{c}{\rule[0mm]{0mm}{6mm} \fns $^\ast$Side conditions: $\cnomm$ (resp.~$\nomj$) must not occur in the conclusion of $\wbox_S$ (resp.~$\wdia_P$).} \\
\mc{2}{c}{\fns Labelled formulas in the conclusion of any logical rule are in display.} \\
\end{tabular}
\end{center}
 }


\begin{remark}
The invertible version of $\aand_P$,  $\aor_S$, f $\wbox_P$ and $\wdia_S$  are  as follows:

{\small
\begin{center}
\begin{tabular}{rl}
\AXC{$\Gamma, \nomj \leq  A, \nomj \leq B \vd \Delta$}
\LL{$\aand_P$}
\UIC{$\Gamma, \nomj \leq A \aand B \vd \Delta$}
\DP
 \ & \ 
\AXC{$\Gamma, A \leq \cnomm, B \leq \cnomm \vd \Delta$}
\RL{$\aor_S$}
\UIC{$\Gamma, A \aor B \leq \cnomm \vd \Delta$}
\DP
\\
 & \\
\AXC{$\Gamma, \nomj \leq \wbox A \vd A \leq \cnomm, \nomj \leq \wbox \cnomm, \Delta$}
\LL{$\wbox_P$}
\UIC{$\Gamma, \nomj \leq \wbox A \vd \nomj \leq \wbox \cnomm, \Delta$}
\DP 
 \ & \  
\AXC{$\Gamma, \wdia A\leq \cnomm \vd \nomj \leq A, \wdia \nomj \leq \cnomm, \Delta$}
\RL{$\wdia_S$}
\UIC{$\Gamma, \wdia A \leq \cnomm \vd \wdia \nomj \leq \cnomm, \Delta$}
\DP 
 \\
\end{tabular}
\end{center}
 }
 
Invertible rules can be used whenever the goal is to facilitate backwards-looking proof searches. In this case, the initial rules have to be generalized accordingly.
\end{remark}


The table below collects the analytic rules, both in the format of display calculi and in the present format,  generated by reading off the ALBA outputs of the corresponding axioms  reported on in Section \ref{sec:TheAlgorithmALBA}.

{\footnotesize 
\begin{center}
\begin{tabular}{|rc|c|c|}
\hline
\mc{2}{c}{modal axiom} & \mc{1}{c}{display rule} & \mc{1}{c}{labelled rule} \\
\hline
\rule[-4mm]{0mm}{10mm}
(4) & $\wdia \wdia A \vdash \wdia A$ & 
\AXC{$\WDIA X \vd Y$}
\LL{\fns $4$}
\UIC{$\WDIA \WDIA X \vd Y$}
\DP & 
\AXC{$\Gamma \vd \wdia \nomj \leq \cnomm, \Delta$}
\LL{\fns $4$}
\UIC{$\Gamma, \nomh \leq \wdia \nomj \vd \wdia \nomh \leq \cnomm, \Delta$}
\DP \\
\hline
\rule[-4mm]{0mm}{10mm}
(T) & $\wbox A \vdash A$ & 
\AXC{$X \vd \WBOX Y$}
\RL{\fns $T$}
\UIC{$X \vd Y$}
\DP
&  
\AXC{$\Gamma \vd \nomj \leq \wbox \cnomm, \Delta$}
\RL{\fns $T$}
\UIC{$\Gamma \vd \nomj \leq \cnomm, \Delta$}
\DP
\\
\hline
\rule[-4mm]{0mm}{10mm}
(B) & $A \vdash \wbox \wdia A$ &
\AXC{$\WDIA X \vd Y$}
\RL{\fns $B$}
\UIC{$X \vd \WBOX Y$}
\DP
 & 
\AXC{$\Gamma \vd \wdia \nomj\leq \cnomm, \Delta$}
\RL{\fns $B$}
\UIC{$\Gamma \vd \nomj\leq \wbox \cnomm, \Delta$}
\DP
 \\
\hline
\rule[-4mm]{0mm}{10mm}
(D) & $ \wbox A \vdash \wdia A$ &
\AXC{$\WDIA \BDIA X \vd Y$}
\LL{\fns D}
\UIC{$ X \vd Y$}
\DP
& 
\AXC{$\Gamma, \nomk \leq \bdia \nomj \vd \wdia \nomk \leq \cnomm, \Delta$}
\LL{\fns D}
\UIC{$\Gamma \vd \nomj\leq \cnomm, \Delta$}
\DP
 \\
\hline
\rule[-4mm]{0mm}{10mm}
(C) & $\wdia \wbox A \vdash \wbox \wdia A$ \ \ & \ \  
\AXC{$\WDIA \BDIA X \vd Y$}
\LL{\fns $C$}
\UIC{$\BDIA \WDIA X \vd Y$}
\DP \ \ \ 
& 
\ \ 
\AXC{$\Gamma, \nomk\leq \bdia\nomj \vd \wdia \nomk\leq \cnomm, \Delta$}
\LL{\fns $C$}
\UIC{$\Gamma, \nomh\leq \wdia\nomj \vd \bdia\nomh\leq \cnomm, \Delta$}
\DP \ \ 
\\
\hline
\end{tabular}
\end{center}
 }
 
 Where $\nomk$ in rules $C$ and $D$ must not appear in $\Gamma, \Delta$. For any $\Sigma\subseteq \{(T), (4), (B), (D), (C)\}$, we let $\mathbf{A.L}\Sigma$ be the calculus defined by the rules in  the sections above plus the additional rules in the table above corresponding to the axioms in $\Sigma$. We let $\mathbf{A.L}: = \mathbf{A.L}\varnothing$.

\section{Properties of the calculus $\mathbf{A.L}\Sigma$}
\label{sec:PropertiesOfTheValculusAL}
\subsection{Soundness}
In the present section, we show that, for any $\Sigma\subseteq \{(T), (4), (B), (D), (C)\}$, the rules of $\mathbf{A.L}\Sigma$ are sound on the class $\mathsf{K}^\delta(\Sigma): = \{\mathbb{A}^\delta\mid \mathbb{A}\models \Sigma\}$.
Firstly, let us recall that, as usual, any $\mathbf{A.L}$-sequent $\Gamma\vd \Delta$ is to be interpreted as ``any assignment of the variables in $\Prop\cup\mathsf{NOM}\cup\mathsf{CNOM}$ under which  all inequalities in $\Gamma$ are satisfied also satisfies some inequality in $\Delta$''; in symbols: $\forall \overline{p}\forall \overline{\nomj}\forall \overline{\cnomm}(\bigwith \Gamma   \Longrightarrow  \bigparr \Delta)$.

As to the basic calculus $\mathbf{A.L}$, the soundness of the initial rules, cut rules, adjunction rules, and logical rules for propositional connectives is straightforward. The soundness of the switch rules hinges on the fact that nominal and co-nominal variables range over completely join-generating and completely meet-generating subsets of $\mathbb{A}^\delta$ for any $\mathcal{L}$-algebra $\mathbb{A}$. For example,  the soundness of S$\nomj\cnomm$ follows from the following chain of equivalences:

{\small
\begin{center}
\begin{tabular}{cll}
&$\forall \ol{p}\forall \nomj \forall \ol{\nomk} \forall \ol{\cnomn} (\bigwith \Gamma \with \nomj \leq A \Rightarrow \nomj \leq B \parr \bigparr \Delta)$ &  \ \ \ validity of premise \\
iff & $\forall \ol{p} \forall \ol{\nomk} \forall \ol{\cnomn} (\bigwith \Gamma  \Rightarrow \forall \nomj (\nomj \leq A \Rightarrow \nomj \leq B) \parr \bigparr \Delta)$ &  \ \ \ uncurrying + side condition \\
iff & $\forall \ol{p} \forall \ol{\nomk} \forall \ol{\cnomn} (\bigwith \Gamma  \Rightarrow  A \leq B \parr \bigparr \Delta)$ &  \ \ \ c.~join generation \\
iff & $\forall \ol{p} \forall \ol{\nomk} \forall \ol{\cnomn} (\bigwith \Gamma  \Rightarrow \forall \cnomm (B \leq \cnomm \Rightarrow A \leq \cnomm) \parr \bigparr \Delta)$ &  \ \ \ c.~meet generation\\
iff & $\forall \ol{p} \forall \cnomm \forall \ol{\nomk} \forall \ol{\cnomn} (\bigwith \Gamma  \with B \leq \cnomm \Rightarrow  A \leq \cnomm \parr \bigparr \Delta)$ &  \ \ \ currying\\ 
\end{tabular}
\end{center}
}

 Since $\cnomm$ does not occur in  $\Gamma$ and $\Delta$, the rule is also invertible. The verification of the soundness of the remaining switch rules is similar.  

The soundness and invertibility of the introduction rules for the modal connectives  hinge on the fact that the operation $\wdia^\sigma$ (resp.~$\wbox^\pi$) is completely join-preserving (resp.~completely meet preserving)  and on the complete- join-generation and meet-generation properties of the subsets of $\mathbb{A}^\delta$ on which nominal and co-nominal variables are interpreted. For example,  the soundness and invertibility of $\wbox_S$ is verified via  the following chain of equivalences:

{\small
\begin{center}
\begin{tabular}{cll}
&$\forall \ol{p}\forall \nomj\forall \cnomm \forall \ol{\nomk} \forall \ol{\cnomn} (\bigwith \Gamma \with   A \leq \cnomm \Rightarrow  \nomj\leq \wbox \cnomm \parr \bigparr \Delta)$ &  \ \ \ validity of premise \\
iff & $\forall \ol{p} \forall \nomj \forall \ol{\nomk} \forall \ol{\cnomn} (\bigwith \Gamma  \Rightarrow \forall \cnomm ( A\leq \cnomm \Rightarrow \nomj \leq \wbox \cnomm) \parr \bigparr \Delta)$ &  \ \ \ uncurrying + side condition \\
iff & $\forall \ol{p} \forall \nomj \forall \ol{\nomk} \forall \ol{\cnomn} (\bigwith \Gamma  \Rightarrow \forall \cnomm ( A\leq \cnomm \Rightarrow \bdia \nomj \leq  \cnomm) \parr \bigparr \Delta)$ &  \ \ \ adjunction \\
iff & $\forall \ol{p} \forall \nomj \forall \ol{\nomk} \forall \ol{\cnomn} (\bigwith \Gamma  \Rightarrow  \bdia \nomj \leq A  \parr \bigparr \Delta)$ &  \ \ \ c.~meet-generation \\
iff & $\forall \ol{p} \forall \nomj \forall \ol{\nomk} \forall \ol{\cnomn} (\bigwith \Gamma  \Rightarrow   \nomj \leq \wbox A  \parr \bigparr \Delta)$ &  \ \ \ adjunction \\
\end{tabular}
\end{center}
}

The soundness of $\wbox_P$ immediately follows from the monotonicity of $\wbox^\pi$. Indeed, fix an assignment of variables in $\Prop\cup \mathsf{NOM}\cup\mathsf{CNOM}$ under which all inequalities in $\Gamma$ and $\nomj\leq \wbox A$ are satisfied. If such assignment also satisfies $A\leq \cnomn$, then, by monotonicity, $\nomj\leq \wbox A\leq \wbox \cnomn$, as required. The proof for the rules $\wdia_P$ and $\wdia_S$ is similar.

As to the extended calculus $\mathbf{A.L}\Sigma$, the soundness of  rule (4) is verified by the following chain of computations holding on the canonical extension of any $\mathcal{L}$-algebra $\mathbb{A}$ such that $\mathbb{A}\models \wdia\wdia p\leq\wdia p$:

{\small
\begin{center}
\begin{tabular}{cll}
&$\forall \ol{p}\forall \ol{\nomk}\forall \nomj\forall \cnomm    \forall \ol{\cnomn} (\bigwith \Gamma  \Rightarrow  \wdia \nomj \leq \cnomm \parr \bigparr \Delta)$ & \ \ \ validity of premise \\
then &$\forall \ol{p}\forall \ol{\nomk}\forall \nomj\forall \cnomm \forall \ol{\cnomn} (\bigwith \Gamma  \Rightarrow  \wdia\wdia \nomj \leq \cnomm \parr \bigparr \Delta)$ & \ \ \ axiom (4) \\
iff & $\forall \ol{p}\forall \ol{\nomk}\forall \nomj  \forall \cnomm  \forall \ol{\cnomn} (\bigwith \Gamma \Rightarrow    \forall \nomh(\nomh \leq \wdia\nomj \Rightarrow  \wdia \nomh\leq  \cnomm )\parr \bigparr \Delta)$ & \ \ \ c.~join-generation \\
iff &$\forall \ol{p}\forall \ol{\nomk}\forall \nomj\forall \cnomm  \forall \nomh  \forall \ol{\cnomn} (\bigwith \Gamma \with \nomh \leq \wdia\nomj \Rightarrow  \wdia\nomh\leq  \cnomm \parr \bigparr \Delta)$ & \ \ \ currying + $\nomh$ fresh \\
\end{tabular}
\end{center}
 }
 
The key step in the computation above is the one which makes use of the assumption of  axiom (4) being valid on $\mathbb{A}$. Indeed, by the general theory of correspondence for LE-logics (cf.~\cite{CP-nondist}), axiom (4) is canonical, hence the assumption implies that  (4) is valid also on $\mathbb{A}^\delta$. Then, as is shown in the computation concerning axiom (4) in Section \ref{sec:TheAlgorithmALBA}, the validity of (4) in $\mathbb{A}^\delta$ is equivalent to the condition $\forall \nomj  \forall \cnomm \, (\wdia \nomj \leq \cnomm \Rarr \wdia \wdia \nomj  \leq \cnomm)$ holding in $\mathbb{A}^\delta$, which justifies this key step. The verification of the remaining additional rules  hinges on similar arguments and facts (in particular, all axioms we consider are canonical), so in what follows we only report on the corresponding computations.

{\small
\begin{center}
\begin{tabular}{cll}
&$\forall \ol{p}\forall \ol{\nomk}\forall \nomj\forall \cnomm    \forall \ol{\cnomn} (\bigwith \Gamma  \Rightarrow   \nomj \leq \wbox\cnomm \parr \bigparr \Delta)$ & \ \ \ validity of premise \\
then &$\forall \ol{p}\forall \ol{\nomk}\forall \nomj\forall \cnomm    \forall \ol{\cnomn} (\bigwith \Gamma  \Rightarrow   \nomj \leq \cnomm \parr \bigparr \Delta)$ & \ \ \ axiom (T) \\
\end{tabular}
\end{center}

\begin{center}
\begin{tabular}{cll}
&$\forall \ol{p}\forall \ol{\nomk}\forall \nomj\forall \cnomm    \forall \ol{\cnomn} (\bigwith \Gamma  \Rightarrow  \wdia \nomj \leq \cnomm \parr \bigparr \Delta)$ & \ \ \ validity of premise \\
then &$\forall \ol{p}\forall \ol{\nomk}\forall \nomj\forall \cnomm    \forall \ol{\cnomn} (\bigwith \Gamma  \Rightarrow   \nomj\leq \wbox \cnomm \parr \bigparr \Delta)$ & \ \ \ axiom (B) \\
\end{tabular}
\end{center}

\begin{center}
\begin{tabular}{cll}
&$\forall \ol{p}\forall \ol{\nomk}\forall \nomj\forall \cnomm    \forall \ol{\cnomn} (\bigwith \Gamma  \Rightarrow \nomj\leq \wbox \cnomm    \parr \bigparr \Delta)$ & \ \ \ validity of premise \\
then &$\forall \ol{p}\forall \ol{\nomk}\forall \nomj\forall \cnomm    \forall \ol{\cnomn} (\bigwith \Gamma  \Rightarrow  \wdia \nomj \leq \cnomm \parr \bigparr \Delta)$ & \ \ \ axiom (B$^{-1}$) \\
\end{tabular}
\end{center}
 }
 
 The soundness of the rule (C) is verified by the following chain of computations holding on the canonical extension of any $\mathcal{L}$-algebra $\mathbb{A}$ such that $\mathbb{A}\models \wdia\wbox p\leq\wbox\wdia p$:

{\small
\begin{center}
\begin{tabular}{cll}
&$\forall \ol{p}\forall \ol{\nomh'}\forall \nomj\forall \cnomm \forall \nomk   \forall \ol{\cnomn} (\bigwith \Gamma \with   \nomk \leq \bdia\nomj \Rightarrow  \wdia\nomk\leq  \cnomm \parr \bigparr \Delta)$ &  \ \ \ validity of premise \\
iff & $\forall \ol{p}\forall \ol{\nomh'}\forall \nomj\forall \cnomm    \forall \ol{\cnomn} (\bigwith \Gamma \Rightarrow    \forall \nomk(\nomk \leq \bdia\nomj \Rightarrow  \wdia\nomk\leq  \cnomm )\parr \bigparr \Delta)$ &  \ \ \ uncurrying + side cond. \\
iff &$\forall \ol{p}\forall \ol{\nomh'}\forall \nomj  \forall \cnomm  \forall \ol{\cnomn} (\bigwith \Gamma \Rightarrow    \forall \nomk(\nomk \leq \bdia\nomj \Rightarrow  \nomk\leq \bbox \cnomm )\parr \bigparr \Delta)$ &  \ \ \ adjunction\\

iff & $\forall \ol{p}\forall \ol{\nomh'}\forall \nomj  \forall \cnomm  \forall \ol{\cnomn} (\bigwith \Gamma \Rightarrow     \bdia\nomj \leq \bbox \cnomm \parr \bigparr \Delta)$ &  \ \ \ c.~join-generation \\
iff & $\forall \ol{p}\forall \ol{\nomh'}\forall \nomj\forall \cnomm    \forall \ol{\cnomn} (\bigwith \Gamma \Rightarrow     \wdia\bdia\nomj \leq  \cnomm \parr \bigparr \Delta)$ &  \ \ \ adjunction \\
then & $\forall \ol{p}\forall \ol{\nomh'}\forall \nomj  \forall \cnomm  \forall \ol{\cnomn} (\bigwith \Gamma \Rightarrow     \bdia \wdia\nomj \leq  \cnomm \parr \bigparr \Delta)$ &  \ \ \ axiom (C) \\
iff & $\forall \ol{p}\forall \ol{\nomh'}\forall \nomj\forall \cnomm   \forall \ol{\cnomn} (\bigwith \Gamma \Rightarrow     \wdia\nomj \leq \wbox \cnomm \parr \bigparr \Delta)$ &  \ \ \ adjunction \\
iff &$\forall \ol{p}\forall \ol{\nomh'}\forall \nomj\forall \cnomm    \forall \ol{\cnomn} (\bigwith \Gamma \Rightarrow    \forall \nomi(\nomi \leq \wdia\nomj \Rightarrow  \nomi\leq \wbox \cnomm )\parr \bigparr \Delta)$ &  \ \ \ c.~join-generation\\
iff &$\forall \ol{p}\forall\nomi\forall \ol{\nomh'}\forall \nomj \forall \cnomm  \forall \ol{\cnomn} (\bigwith \Gamma \with    \nomi \leq \wdia\nomj \Rightarrow  \nomi\leq \wbox \cnomm \parr \bigparr \Delta)$ &  \ \ \ currying\\
iff &$\forall \ol{p}\forall\nomi\forall \ol{\nomh'}\forall \nomj\forall \cnomm    \forall \ol{\cnomn} (\bigwith \Gamma \with    \nomi \leq \wdia\nomj \Rightarrow  \bdia\nomi\leq  \cnomm \parr \bigparr \Delta)$ &  \ \ \ adjunction\\
\end{tabular}
\end{center}
}

Instantiating $\nomi$ as $\nomh$ completes the proof. The key step in the computation above is the one which makes use of the assumption of  axiom (C) being valid on $\mathbb{A}$. Indeed, by the general theory of correspondence for LE-logics (cf.~\cite{CP-nondist}), axiom (C) is canonical, hence the assumption implies that  (C) is valid also on $\mathbb{A}^\delta$. Then, as is shown in the computation concerning axiom (C) in Section \ref{sec:TheAlgorithmALBA}, the validity of (C) in $\mathbb{A}^\delta$ is equivalent to the condition $\forall \nomj  \forall \cnomm \, (\wdia\bdia \nomj \leq \cnomm \Rarr \bdia \wdia \nomj  \leq \cnomm)$ holding in $\mathbb{A}^\delta$, which justifies this key step. 

The soundness of the rules (D), (T), (B) is verified in a similar way.

\subsection{Syntactic completeness}
\label{sec:SyntacticCompleteness}

In the present section, we show that all the axioms and rules of the basic logic are derivable in $\mathbf{A.L}$, and that
for any  $\Sigma\subseteq \{$(T), (4), (B), (D), (C)$\}$, the axioms and rules of  $\mathbf{L}\Sigma$ are derivable in  $\mathbf{A.L}\Sigma$.\footnote{That is, we show that, for any $\mathcal{L}$-axiom $s = A\vdash B \in \{$(T), (4), (B), (D), (C)$\}$, a derivation exists in  $\mathbf{A.L}\{s\}$ of the sequent $\nomj\leq A\vdash \nomj\leq B$, or equivalently of $B\leq \cnomm\vdash A\leq \cnomm$.}

The sequents $p \vd p$, $\bot \vd p$, $p \vd \top$, $p \vd p \vee q$ ($q \vd p \vee q$), and $p \wedge q \vd p$ ($q \wedge p \vd q$) are trivially derivable with one single application of the rule $Id_{\nomj\leq q}$, $\bot_w$, $\top_w$, $\vee_S$, and $\wedge_P$, respectively. The derivability of the rules $\frac{\varphi \vd \chi\ \chi \vd \psi}{\varphi \vd \psi}$, $\frac{\varphi \vd \chi\ \psi \vd \chi}{\varphi \vee \psi \vd \chi}$, and $\frac{\chi \vd \varphi\ \chi \vd \psi}{\chi \vd \varphi \wedge \psi}$ can be shown by  derivations in which  the cut rules, $\vee_P$, and $\wedge_S$, respectively, are applied. The two derivations below show the rules concerning the connectives $\wbox$ and $\wdia$:

\begin{center}
{\small
\begin{tabular}{rl}
\AXC{$\psi \leq \cnomm \vd \varphi \leq \cnomm$}
\LL{$\wbox_P$}
\UIC{$\nomj \leq \wbox \varphi, \psi \leq \cnomm \vd \nomj \leq \wbox \cnomm$}
\LL{$\wbox_S$}
\UIC{$\nomj \leq \wbox \varphi \vd\nomj \leq\wbox \psi$}
\DP
 \ & \  
\AXC{$\nomj \leq \varphi \vd \nomj \leq \psi$}
\RL{$\wdia_S$}
\UIC{$\wdia \psi \leq \cnomm, \nomj \leq \varphi \vd \wdia \nomj \leq \cnomm$}
\RL{$\wdia_P$}
\UIC{$\wdia \psi \leq \cnomm \vd \wdia \varphi \leq \cnomm$}
\DP
 \\
\end{tabular}
}
\end{center}

To show the admissibility of the substitution rule $\frac{\varphi \vd \psi}{\varphi(\chi/p) \vd \psi(\chi/p)}$, a straightforward induction on the derivation height of $\varphi \vd \psi$ suffices.

As to the axioms and rules of the basic logic $\mathbf{L}$,  below, we derive the axioms encoding the distributivity of  $\wdia$ over $\aor$ and $\bot$ in $\mathbf{A.L}$. The distributivity of $\wbox$ over $\wedge$ and $\top$ is derived similarly.

{\scriptsize
\begin{center}
\begin{tabular}{cc}
\negthinspace\negthinspace
\negthinspace\negthinspace
\negthinspace\negthinspace
\negthinspace\negthinspace
\negthinspace\negthinspace
\negthinspace\negthinspace
\negthinspace\negthinspace
\negthinspace\negthinspace
\bottomAlignProof
\AXC{\phantom{A}}
\LL{Id$_{\,\nomi \leq A}$}
\UIC{$\nomi \leq A \vd \nomi \leq A$}
\RL{$\wdia_S$}
\UIC{$\nomi \leq A, \wdia A \leq \cnomm \vd  \wdia \nomi \leq \cnomm$}
\RL{$\vee_S$}
\UIC{$\nomi \leq A, \wdia A \vee \wdia B \leq \cnomm \vd  \wdia \nomi \leq \cnomm$}
\RL{$\wdia \dashv \bbox$}
\UIC{$\nomi \leq A, \wdia A \vee \wdia B \leq \cnomm  \vd  \nomi \leq \blacksquare \cnomm$}
\LL{$S$}
\UIC{$\wdia A \vee \wdia B \leq \cnomm, \blacksquare \cnomm \leq \cnomn \vd   A  \leq \cnomn$}
\AXC{\phantom{B}}
\LL{Id$_{\,\nomi \leq B}$}
\UIC{$\nomi \leq B \vd \nomi \leq B$}
\RL{$\wdia_S$}
\UIC{$\nomi \leq B, \wdia B \leq \cnomm \vd  \wdia \nomi \leq \cnomm$}
\RL{$\vee_S$}
\UIC{$\nomi \leq B, \wdia A \vee \wdia B \leq \cnomm \vd  \wdia \nomi \leq \cnomm$}
\RL{$\wdia \dashv \bbox$}
\UIC{$\nomi \leq B, \wdia A \vee \wdia B \leq \cnomm \vd \nomi \leq \blacksquare \cnomm$}
\RL{$S$}
\UIC{$\wdia A \vee \wdia B \leq \cnomm,  \blacksquare \cnomm \leq \cnomn \vd   B \leq \cnomn$}
\LL{$\vee_P$}
\BIC{$\wdia A \vee \wdia B \leq \cnomm,  \blacksquare \cnomm \leq \cnomn \vd   A \vee B \leq \cnomn$}
\RL{$S$}
\UIC{$\nomh \leq A \vee B, \wdia A \vee \wdia B \leq \cnomm \vd \nomh \leq \blacksquare \cnomm$}
\LL{$\wdia \dashv \bbox^{-1}$}
\UIC{$\nomh \leq A \vee B, \wdia A \vee \wdia B \leq \cnomm \vd \wdia \nomh \leq \cnomm$}
\LL{$\wdia_P$}
\UIC{$\wdia A \vee \wdia B \leq \cnomm \vd \wdia(A \vee B) \leq \cnomm$}
\DP
 & 
\bottomAlignProof
\AXC{$\phantom{A}$}
\LL{Id$_{\,\nomj \leq \abot}$}
\UIC{$\nomj \leq \abot \vd \nomj \leq \abot$}
\RL{$\wdia\abot$}
\UIC{$\abot \leq \cnomm, \nomj \leq \abot \vd \nomj \leq \bbox \cnomm$}
\LL{$\wdia \dashv \bbox^{-1}$}
\UIC{$\abot \leq \cnomm, \nomj \leq \wdia \abot \vd \nomj \leq \cnomm$}
\LL{$\wdia_P$}
\UIC{$\abot \leq \cnomm \vd \wdia \abot \leq \cnomm$}
\DP
 \\
\end{tabular}
\end{center}
}

The syntactic completeness for the other axioms and rules of $\mathbf{L}$ can be shown in a similar way. In particular, the admissibility of the substitution rule can be proved by induction in a standard manner.

As to the axiomatic extensions of $\mathbf{L}$, Let us consider the axiom $(4)\  \wdia \wdia A \vdash \wdia A$. Using ALBA we generate the first order correspondent $\forall \nomj \forall \cnomm \, (\wdia \nomj \leq \cnomm \Rarr \wdia \wdia \nomj \leq \cnomm)$. Further processing the axiom $(4)$ using ALBA we obtain the equivalent first order correspondent (in the so-called `flat form'): $\forall \nomj \forall \nomh \forall \cnomm \, (\wdia \nomj \leq \cnomm \Rarr (\nomh \leq \wdia \nomj \Rarr \wdia \nomh \leq \cnomm))$, which can be written as a structural rule in the language of ALBA labelled calculi as follows:
\begin{center}
{\small
\AXC{$\Gamma \vd \wdia \nomj \leq \cnomm, \Delta$}
\LL{\fns $4$}
\UIC{$\Gamma, \nomh \leq \wdia \nomj \vd \wdia \nomh \leq \cnomm, \Delta$}
\DP
}
\end{center}
We now provide a derivation of the axiom $(4)$ in the basic labelled calculus $\mathbf{A.L}$ expanded with the previous structural rule $4$.
{\small 
\begin{center}
\AXC{ \ }
\LL{\fns Id$_A$}
\UIC{$\nomj \leq A \vd \nomj \leq A$}
\RL{\fns $\wdia_S$}
\UIC{$\nomj \leq A, \wdia A \leq \cnomm \vd \wdia \nomj \leq \cnomm$}
\LL{\fns $4$}
\UIC{$\nomj \leq A, \wdia A \leq \cnomm, \nomh \leq \wdia \nomj \vd \wdia \nomh \leq \cnomm$}
\RL{\fns $\wdia \dashv \bbox$}
\UIC{$\nomj \leq A, \wdia A \leq \cnomm, \nomh \leq \wdia \nomj \vd \nomh \leq \bbox \cnomm$}
\RL{\fns S}
\UIC{$\nomj \leq A, \wdia A \leq \cnomm, \bbox \cnomm \leq  \cnomm' \vd \wdia \nomj \leq \cnomm'$}
\LL{\fns $\wdia_P$}
\UIC{$\wdia A \leq \cnomm, \bbox \cnomm \leq  \cnomm' \vd \wdia A \leq \cnomm'$}
\RL{\fns S}
\UIC{$\nomj' \leq \wdia A, \wdia A \leq \cnomm \vd \nomj' \leq \bbox \cnomm$}
\LL{\fns $\wdia \dashv \bbox^{-1}$}
\UIC{$\nomj' \leq \wdia A, \wdia A \leq \cnomm \vd \wdia \nomj' \leq \cnomm$}
\LL{\fns $\wdia_P$}
\UIC{$\wdia A \leq \cnomm \vd \wdia \wdia A \leq \cnomm$}
\DP
\end{center}
 }



Analogously, we provide a derivation of the axioms (T), (B), (D), and (C) in the basic labelled calculus $\mathbf{A.L}$ expanded with the structural rules $T$, $B$, $D$, and $C$, respectively.
\begin{center}
{\small
\begin{tabular}{rl}
\bottomAlignProof
\AXC{ \ }
\LL{\fns Id$_A$}
\UIC{$ A \leq \cnomm \vd A \leq \cnomm$}
\RL{\fns $\wbox_P$}
\UIC{$\nomj \leq \wbox A, A \leq \cnomm \vd \nomj \leq \wbox \cnomm$}
\RL{\fns $T$}
\UIC{$\nomj \leq \wbox A, A \leq \cnomm \vd \nomj \leq \cnomm$}
\RL{\fns $S$}
\UIC{$\nomj \leq \wbox A \vd \nomj \leq A$}
\DP
&
\bottomAlignProof
\AXC{ \ }
\RL{\fns Id$_A$}
\UIC{$\nomj \leq A \vd \nomj \leq A$}
\RL{\fns $\wdia_S$}
\UIC{$\nomj \leq A, \wdia A \leq \cnomm \vd \wdia \nomj \leq \cnomm$}
\RL{\fns $B$}
\UIC{$\nomj \leq A, \wdia A \leq \cnomm \vd \nomj \leq \wbox \cnomm$}
\LL{\fns $\wbox_S$}
\UIC{$\nomj \leq A \vd \nomj \leq \wbox \wdia A$}
\DP
 \\ \\
\bottomAlignProof
\AXC{ \ }
\LL{\fns Id$_A$}
\UIC{$A \leq \cnomn \vd A \leq \cnomn$}
\RL{\fns $\wbox_P$}
\UIC{$\nomj \leq \wbox A, A \leq \cnomn \vd \nomj \leq \wbox \cnomn$}
\RL{\fns $\bdia \dashv \wbox$}
\UIC{$\nomj \leq \wbox A, A \leq \cnomn \vd \bdia \nomj \leq \cnomn$}
\LL{\fns S}
\UIC{$\nomk \leq \bdia \nomj, \nomj \leq \wbox A \vd \nomk \leq A$}
\RL{\fns $\wdia_S$}
\UIC{$\nomk \leq \bdia \nomj, \nomj \leq \wbox A, \wdia A \leq \cnomm \vd \wdia \nomk \leq \cnomm$}
\LL{\fns $D$}
\UIC{$\nomj \leq \wbox A, \wdia A \leq \cnomm \vd \nomj \leq \cnomm$}
\LL{\fns S}
\UIC{$\wdia A \leq \cnomm \vd \wbox A \leq \cnomm$}
\DP
 & 
\bottomAlignProof
\AXC{ \ }
\LL{\fns Id$_A$}
\UIC{$A \leq \cnomo \vd A \leq \cnomo$}
\LL{\fns $\wbox_P$}
\UIC{$\nomj \leq \wbox A, A \leq \cnomo \vd \nomj \leq \wbox \cnomo$}
\LL{\fns $\bdia \dashv \wbox$}
\UIC{$\nomj \leq \wbox A, A \leq \cnomo \vd \bdia \nomj \leq \cnomo$}
\RL{\fns $S$}
\UIC{$\nomk \leq \bdia \nomj, \nomj \leq \wbox A \vd \nomk \leq A$}
\RL{\fns $\wdia_S$}
\UIC{$\nomk \leq \bdia \nomj, \nomj \leq \wbox A, \wdia A \leq \cnomm \vd \wdia \nomk \leq \cnomm$}
\LL{\fns $C$}
\UIC{$\nomh \leq \wdia \nomj, \nomj \leq \wbox A, \wdia A \leq \cnomm \vd \bdia \nomh \leq \cnomm$}
\RL{\fns $\bdia \dashv \wbox^{-1}$}
\UIC{$\nomh \leq \wdia \nomj, \nomj \leq \wbox A, \wdia A \leq \cnomm \vd \nomh \leq \wbox \cnomm$}
\RL{\fns $S$}
\UIC{$\nomj \leq \wbox A, \wdia A \leq \cnomm, \wbox \cnomm \leq \cnomn \vd \wdia \nomj \leq \cnomn$}
\LL{\fns $\wdia_P$}
\UIC{$\wdia A \leq \cnomm, \wbox \cnomm \leq \cnomn \vd \wdia \wbox A \leq \cnomn$}
\RL{\fns $S$}
\UIC{$\nomi \leq \wdia \wbox A, \wdia A \leq \cnomm \vd \nomi \leq \wbox \cnomm$}
\RL{\fns $\wbox_S$}
\UIC{$\nomi \leq \wdia \wbox A \vd \nomi \leq \wbox \wdia A$}
\DP
\end{tabular}
}
\end{center}

\subsection{Conservativity}

In the present section, we argue that, for any $\Sigma\subseteq \{(T), (4), (B), (D), (C)\}$ and all $\mathcal{L}$-formulas $A$ and $B$, if the $\mathbf{A.L}$-sequent $\nomj\leq A\vd \nomj\leq B$ is derivable in $\mathbf{A.L}\Sigma$, then the $\mathcal{L}$-sequent $A\vd B$ is an $\mathbf{L}.\Sigma$-theorem.

Indeed, because the rules of $\mathbf{A.L}$ are sound in the class $\mathsf{K}^\delta (\Sigma)= \{\mathbb{A}^\delta\mid \mathbb{A}\models \Sigma\}$, the assumption implies that $\mathbb{A}^\delta\models \forall \ol{p}\forall \nomj(\nomj\leq A\Rarr \nomj\leq B)$ which, by complete join-generation, is equivalent to 
$\mathbb{A}^\delta\models \forall \ol{p}( A\leq B)$. Since, as discussed in Section \ref{ssec:interpretation ALBA}, $\mathbf{L}.\Sigma$ is complete w.r.t.~$\mathsf{K}^\delta(\Sigma)$, this implies that $A\vd B$ is a theorem of $\mathbf{L}.\Sigma$, as required.

\subsection{Cut elimination and subformula property}
\label{CutEliminationAndSubformulaProperty}

As usual in the tradition of display calculi, we first characterize a new class of calculi. The actual definition is given in Appendix \ref{sec:ProperAlgebraicLabelledCalculi}. Here we just mention that a {\em proper labelled calculus} is a proof systems satisfying the conditions C$_2$-C$_8$ of Definition \ref{def:ProperLabelledCalculi}. Then, in the appendix, we prove the following general result, namely a canonical cut elimination theorem {\em \`a la} Belnap for any calculus in this class:

\begin{theorem}
\label{thm:MetaCutElimination}
Any proper labelled calculus enjoys cut-elimination. If also the condition C$_1$ in Definition \ref{def:ProperLabelledCalculi} is satisfied, then the proof system enjoys the subformula property.
\end{theorem}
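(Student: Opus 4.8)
The plan is to follow the classical Belnap strategy for display/labelled calculi, adapted to the present ``proper labelled calculus'' format. The argument splits into two largely independent parts: eliminating cuts, and then reading off the subformula property.

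\textbf{Cut elimination.} First I would set up the usual double induction on the pair $(|A|, h)$, where $|A|$ is the complexity of the (labelled) cut formula and $h$ is the sum of the heights of the two subderivations feeding the cut. The conditions $\mathrm{C}_2$--$\mathrm{C}_8$ are precisely the combinatorial conditions that make each reduction step go through, so the proof proceeds by the standard case distinction on how the cut formula is introduced in the two premises. The cases are: (i) at least one premise is an axiom introducing the cut formula --- here $\mathrm{C}_3$/$\mathrm{C}_4$ (or their analogues for introducing structures in initial rules) let us replace the cut by the other subderivation, possibly with a substitution; (ii) the cut formula is parametric in at least one premise --- then we use the congruence/history machinery of Definition~\ref{def:ParametersCongruenceHistoryTree} to trace the parametric ancestors up to their points of introduction and push the cut upward past the rule that has it as a parameter, invoking $\mathrm{C}_6$--$\mathrm{C}_8$ to guarantee that this permutation is always possible and strictly decreases $h$ while keeping $|A|$ fixed; (iii) the principal-principal case, where the cut formula is principal on both sides --- here $\mathrm{C}_5$ (closure under substitution of the principal-formula-congruent parts) together with the ``reductive'' shape of the logical rules lets us replace the cut by one or more cuts on proper subformulas of $A$, decreasing $|A|$. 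Since the calculi here have switch rules, I would be careful in case (ii) to use the refined notion of local congruence from Definition~\ref{def:ParametersCongruenceHistoryTree} and Remark~\ref{AnalysisOfSwitchRules}: a switch rule may turn a labelled formula $\nomj\le A$ into $A\le\cnomm$ (or a pure structure into its polar opposite), so ``following ancestors'' crosses these rules, but $\mathrm{C}_6$--$\mathrm{C}_8$ are stated exactly so as to accommodate this. Iterating over all cuts (topmost first) then removes them all.

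\textbf{Subformula property.} Once the calculus is cut-free, I would observe that every rule other than cut has the property that each formula occurring in a premise is a subformula of some formula occurring in the conclusion --- and condition $\mathrm{C}_1$ is precisely what packages this for the logical and structural rules of a proper labelled calculus (initial rules trivially satisfy it, and the switch/adjunction rules move material between precedent and succedent but do not increase formula complexity, since $\nomj\le A$, $A\le\cnomm$ share the subformula $A$). A straightforward induction on the height of a cut-free derivation then shows that every formula appearing anywhere in the derivation is a subformula of a formula in the end-sequent.

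\textbf{Main obstacle.} The delicate point is case~(ii) of cut elimination in the presence of switch rules. In a standard display calculus ``being locally congruent'' entails ``being parametric,'' which is exactly what makes the ancestor-tracing in the parametric case clean; here, as the authors flag in Remark~\ref{AnalysisOfSwitchRules} and the surrounding discussion, switch rules have \emph{nonparametric} but locally congruent principal parts, so when the cut formula's ancestors are traced upward they may pass through a switch rule and reappear on the opposite side of $\vdash$ as a syntactically different labelled formula. The work is in checking that the Belnap-style permutation of cut past such a rule is still sound and terminating --- i.e.\ that conditions $\mathrm{C}_6$--$\mathrm{C}_8$, as formulated in Definition~\ref{def:ProperLabelledCalculi}, genuinely cover these configurations and that the induction measure still drops. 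I expect this to be where the bulk of the bookkeeping lies, and it is exactly the point the appendix's definition of proper labelled calculus is engineered to handle; the remaining cases are routine adaptations of Belnap's original argument.
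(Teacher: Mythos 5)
Your proposal follows essentially the same Belnap-style argument as the paper's appendix proof: a principal/parametric case split, history-tracing of the parametric cut occurrence via the congruence conditions, substitution closure to push the cut upward over rules in which it is parametric, and condition C$_1$ plus an induction on cut-free derivations for the subformula property; your identification of the switch rules (nonparametric yet locally congruent occurrences crossing the turnstile) as the delicate point is exactly where Remark~\ref{AnalysisOfSwitchRules} and conditions C$_2$(ii)--(iii) do their work. The only thing to correct is the assignment of conditions to cases in Definition~\ref{def:ProperLabelledCalculi}: it is C$_8$ (eliminability of matching principal constituents), not C$_5$, that discharges the principal--principal case; C$_6$/C$_7$ (closure under substitution for succedent/precedent parts), not C$_8$, that license the parametric permutations; and C$'_5$ (display-invariance of axioms), not C$_3$/C$_4$, that handles a cut formula introduced out of display by an axiom.
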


Finally, we obtain that the calculi $\mathbf{A.L}\Sigma$ introduced in Section \ref{sec:LabelledCalculusAL} enjoys cut elimination and subformula property thanks to the following result, proved in Section \ref{ALIsAProperLabelledCalculus}.

\begin{corollary}
\label{thm:ALIsProper}
$\mathbf{A.L}\Sigma$ is a proper labelled calculus.
\end{corollary}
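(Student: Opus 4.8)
The plan is to verify, rule by rule, that each scheme of $\mathbf{A.L}\Sigma$ satisfies the defining conditions C$_1$--C$_8$ of a proper labelled calculus (Definition~\ref{def:ProperLabelledCalculi} in the appendix), since Theorem~\ref{thm:MetaCutElimination} then yields cut elimination and the subformula property for free. First I would recall the list of conditions: typically C$_1$ is the subformula/subterm property for logical rules (every (meta-)formula or term occurring in a premise is a subformula or subterm of one occurring in the conclusion, modulo the labelled-formula bookkeeping of Definition~\ref{def:j-m-LabelledFormulas}); C$_2$ concerns preservation of parameters along inferences; C$_3$ that congruent parameters are occurrences of the same structure; C$_4$ that a principal structure in the conclusion of a non-axiomatic rule is always the entire labelled formula in display; C$_5$ that the calculus is closed under substitution of parameters on each side; C$_6$ that it is closed under substitution of congruent nonparametric structures (this is where the switch rules and Remark~\ref{AnalysisOfSwitchRules} do the work); C$_7$ that cuts on parametric occurrences can be ``reduced''/eliminated (the display-style parameter-moving condition); and C$_8$ the principal-cut condition, i.e.\ for each pair of logical rules introducing a labelled cut formula $\nomj\leq A$ on the left and on the right, there is a derivation of the conclusion of the cut from the premises using only cuts on proper subformulas. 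So the bulk of the proof is a finite, mechanical case check over the (few) rule shapes: initial rules, cut rules, switch rules, adjunction rules, the structural rules for $\top,\bot$, the logical rules for $\aand,\aor,\wbox,\wdia$, and the five extension rules $(4),(T),(B),(D),(C)$.

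The routine part I would dispatch quickly: C$_2$--C$_5$ and C$_7$ follow immediately from the ``analysis'' fixed in Definition~\ref{def:ParametersCongruenceHistoryTree} together with the Position conventions of Definition~\ref{def:Position}. In every rule, the context metavariables $\Gamma,\Delta$ appear unchanged in premises and conclusion in the same position, their substructures are by definition parameters, congruent parameters literally instantiate the same metavariable, and the displayed labelled formula in the conclusion of a logical rule is principal by fiat; the extension rules $(4),(T),(B),(D),(C)$ and the structural rules for $\top,\bot$ are purely structural (no principal formula), so the conditions about principal formulas are vacuous there, and their freshness side conditions ($\nomk$ fresh in $(C),(D)$; the $\nomj$/$\cnomm$ side conditions of the switch rules) guarantee closure under parameter substitution. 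C$_1$ is a direct inspection: each logical rule only removes a connective or trades a formula for a structural term built from its immediate subterms, and the initial rules are restricted to atoms and constants, so every formula in a premise is a subformula of one in the conclusion; the only subtlety is to phrase the statement using ``$\nomj$-labelled / $\cnomm$-labelled'' formulas à la Definition~\ref{def:j-m-LabelledFormulas} so that e.g.\ $A\leq\cnomm$ in the premise of $\wbox_P$ counts as a subformula occurrence of $\wbox A$ appearing (as $\nomj\leq\wbox A$) in the conclusion.

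The two steps I expect to cost real work are C$_6$ and C$_8$. For C$_6$, closure under substitution of congruent \emph{nonparametric} structures, the point is exactly the one flagged in Remark~\ref{AnalysisOfSwitchRules}: in labelled calculi, unlike in display calculi, congruent occurrences need not be parametric, because the switch rules identify $\nomj\leq A$ with $A\leq\cnomm$ (and $\nomj\leq\NCT$ with $\NCT\leq\cnomm$) as ``the same'' nonparametric structure. I would check that whenever a derivation uses a switch rule on a labelled formula $\nomj\leq C$, one can uniformly replace $C$ by an arbitrary $C'$ of the appropriate polarity (formula, nominal term, or conominal term), because the switch rules are schematic in $A,B,\NCT,\NCT'$ and impose no constraint on these metavariables beyond the side conditions on the fresh variable, which are preserved under substitution. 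This has to be done for each of the eight switch shapes and cross-checked against the logical rules whose conclusions display the relevant labelled formula.

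For C$_8$, the principal-cut condition, I would go through each connective and exhibit the reduction. The cases $\aand/\aor$ are the standard ones: a cut on $\nomj\leq A_1\aand A_2$ with the right rule $\aand_S$ (two premises $\Gamma\vd\nomj\leq A_1$, $\Gamma\vd\nomj\leq A_2$) against the left rule $\aand_P$ (premise $\Gamma',\nomj\leq A_i\vd\Delta'$) reduces to a single cut on $\nomj\leq A_i$; dually for $\aor$. For the modalities the reduction routes the cut through the switch and adjunction rules: a cut on $\nomj\leq\wbox A$ introduced on the right by $\wbox_S$ (premise $\Gamma,A\leq\cnomm\vd\nomj\leq\wbox\cnomm,\Delta$, $\cnomm$ fresh) and on the left by $\wbox_P$ (premise $\Gamma'\vd A\leq\cnomm',\Delta'$) is transformed, using the adjunction rules $\bdia\dashv\wbox$ and a switch, into a cut on the \emph{proper subformula} occurrence $A\leq\cnomm$ (equivalently $\nomj'\leq A$), mirroring the syntactic-completeness derivations already displayed for $(T),(B),(C)$; the $\wdia$ case is dual. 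The identity-axiom cases ($\nomj\leq p$ etc.)\ and the $\top,\bot$ axioms are immediate. I would also have to treat cuts whose cut formula is principal on one side via an \emph{axiom} rather than a logical rule (e.g.\ the cut formula is $\nomj\leq p$ coming from $\mathrm{Id}_{\nomj\leq p}$, or $\nomj\leq\bot$ from $\bot_\nomj$), which reduce trivially by matching the axiom against the other premise. The main obstacle, then, is not any single case but assembling the modal principal-cut reductions so that they genuinely lower complexity — i.e.\ making sure the auxiliary uses of switch/adjunction rules introduced by the reduction do not themselves reintroduce a cut of equal or greater rank — and checking that the five extension rules, being structural, never create a principal cut and hence impose no new C$_8$ obligations. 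Once all of C$_1$--C$_8$ are verified, the Corollary follows.
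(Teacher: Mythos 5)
Your overall strategy --- verify the conditions of Definition \ref{def:ProperLabelledCalculi} rule by rule and then invoke Theorem \ref{thm:MetaCutElimination} --- is the paper's strategy, and your handling of the routine conditions and of the $\aand$/$\aor$ and axiom cases of the principal-cut check matches the paper's. However, your reconstructed list of conditions omits C$'_5$ (display-invariance of axioms and structural rules), and with it the machinery that occupies most of the paper's verification. The paper first proves Lemma \ref{lem:exacttwo} (every nominal or conominal occurs exactly twice, with opposite polarity, in any derivable sequent), then introduces twin structures (Definition \ref{def:twin}) and proves Proposition \ref{prop:twin property} (every pure structure in the antecedent is interderivably accompanied by a twin), and only then obtains the display property (Proposition \ref{thm:DisplayPropertyAL}), which is what C$'_5$ amounts to. This is not a mechanical inspection: one must check that the switch rules and the extension rules $(4)$, $(D)$, $(C)$ preserve the twin property, which for $(4)$ and $(C)$ requires exhibiting explicit re-derivations of the lost twin via adjunction and switch. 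Without C$'_5$ the parametric case (1b) of the canonical cut elimination --- a cut formula introduced by an axiom not in display --- cannot be handled, so this condition cannot be skipped.

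Second, your modal principal-cut reduction covers only half the cases. The cut rules of $\mathbf{A.L}\Sigma$ carry the side condition that Cut$_{\,\nomj\leq A}$ (resp.\ Cut$_{\,B\leq\cnomm}$) applies only to a $\nomj$-labelled (resp.\ $\cnomm$-labelled) formula in the sense of Definition \ref{def:j-m-LabelledFormulas}. When the principal cut formula is $\wbox A$ but $A$ is a $\nomj$-labelled formula, the naive reduction produces a cut on $A\leq\cnomm$ that the side condition forbids; the paper repairs this by applying $\bdia\dashv\wbox$ followed by S$\NCT\nomj$ in the $\pi_1$ branch, and by invoking Proposition \ref{prop:twin property} in the $\pi_2$ branch to pass to an interderivable sequent $\Gamma'', x\leq\cnomm\vd A\leq\cnomm,\Delta''$ on which a switch can be performed, so that the new cut is Cut$_{\,\nomi\leq A}$. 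Your parenthetical ``(equivalently $\nomj'\leq A$)'' gestures at this, but does not explain why the re-approximation is forced or how it is justified --- the justification is exactly the twin property you did not establish. Relatedly, the paper needs Lemma \ref{thm:PreservationOfPrincipalFormulasApproximants} (approximants of congruent principal occurrences can be renamed to coincide) to discharge C$_6$ and C$_7$, which your substitution-closure sketch does not supply. Finally, your ``C$_7$: cuts on parametric occurrences can be reduced'' is not a condition of Definition \ref{def:ProperLabelledCalculi} at all; the parametric moves are handled once and for all inside the proof of Theorem \ref{thm:MetaCutElimination} using C$_2$--C$_7$ and C$'_5$.
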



\section{$\mathbf{A.L}\Sigma$ is a proper labelled calculus}
\label{ALIsAProperLabelledCalculus}

In this section we first show that $\mathbf{A.L}\Sigma$ has the display property and then we provide a proof of Corollary \ref{thm:ALIsProper} as stated in Section \ref{CutEliminationAndSubformulaProperty}.

\begin{lemma} \label{lem:exacttwo}
Let $s$ be any derivable sequent in $\mathbf{A.L}\Sigma$. Then, up to renaming of the variables, every nominal or conominal occurring in $s$ occurs in it exactly twice and with opposite polarity.
\end{lemma}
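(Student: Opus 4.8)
The plan is to prove this by induction on the height of the derivation $\pi$ of $s$ in $\mathbf{A.L}\Sigma$, inspecting the rules of the calculus one by one. The crucial structural observation is that every rule of $\mathbf{A.L}\Sigma$ introduces nominals and conominals in a very controlled way: in the initial rules the nominal/conominal of the principal labelled formula literally occurs twice in the conclusion (once in the antecedent, once in the consequent) with opposite polarity (recall Definition~\ref{def:Position}); in the logical, adjunction, structural, switch and additional rules, each variable occurring in a premise either is passed unchanged to the conclusion in the same polarity, or is ``absorbed'' by a switch/adjunction step that replaces one of its two occurrences, or (the fresh-variable case) is a variable subject to a freshness side condition, so it only occurs in the premise and disappears in the conclusion.

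First I would set up the invariant precisely: for a derivable sequent $s$, I claim that, after a suitable renaming of bound (eigen)variables, each nominal $\nomj$ and each conominal $\cnomm$ appearing in $s$ appears exactly twice, and the two occurrences have opposite polarity in the sense that one is in precedent and the other in succedent position. The base case covers the four initial rules $\mathrm{Id}_{\nomj\leq p}$, $\mathrm{Id}_{p\leq\cnomm}$, $\mathrm{Id}_\bot$, $\mathrm{Id}_\top$, and the zeroary-connective initial rules $\abot_\nomj,\abot_\cnomm,\aatop_\nomj,\aatop_\cnomm$: in each, exactly one nominal or one conominal occurs, and a direct check against Definition~\ref{def:Position} shows it occurs twice with opposite polarity. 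For the inductive step I would organize the rules into three groups. (i) Rules that act ``locally'' on a displayed labelled formula without touching any label — e.g.\ $\aand_P,\aor_S$ and (in the cut-free fragment irrelevant, but for completeness I note the induction is on cut-free derivations anyway by Theorem~\ref{thm:MetaCutElimination}) — here each variable of the conclusion has the same two occurrences as in the corresponding premise, up to renaming, so the invariant is inherited. (ii) Switch and adjunction rules: here exactly one occurrence of the ``pivot'' nominal or conominal is consumed and exactly one new occurrence (of the other polarity, or of the dual-sort variable governed by the side condition ``$\nomj$/$\cnomm$ must not appear in the conclusion'') is created; a case analysis using the stated side conditions shows the count stays at two and the polarities stay opposite — this uses Remark~\ref{AnalysisOfSwitchRules}. (iii) The branching rules $\aand_S,\aor_P$ and the two-premise additional rules, where I must check that the ``shared'' context $\Gamma,\Delta$ contributes each of its variables once per premise but, since the two premises share $\Gamma$ and $\Delta$ verbatim and the principal labelled formula is built over a single fresh or already-present label, the merge in the conclusion still yields exactly two occurrences per variable.

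The main obstacle I anticipate is the bookkeeping around the rules with eigenvariable side conditions — $\wbox_S$, $\wdia_P$, and the additional rules $C$ and $D$ (where $\nomk$ must be fresh), as well as the switch rules $S\cnomm$, $S\nomj$. In these, a variable present in the premise genuinely vanishes from the conclusion, so one must argue that it occurred exactly twice in the premise (by the IH) \emph{and} that both of those occurrences are precisely the ones removed by the rule, so that no ``orphan'' occurrence is left behind in the conclusion; conversely, for rules that introduce a genuinely new label (e.g.\ the $\NCT$ in certain switch rules, or the eigenvariable read bottom-up), one must check the new label is introduced in exactly two positions of opposite polarity. This is where the precise formulation of the side conditions (``$\nomj$ and $\cnomm$ do not appear in $\Gamma$ or $\Delta$'', resp.\ ``must not appear in the conclusion'') does the work: it guarantees that the consumed variable had no occurrence in the parametric context, hence its only two occurrences were the displayed ones. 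I would therefore devote the bulk of the proof to a careful, rule-by-rule table checking, for each rule, (a) which label occurrences in the conclusion come from the premise(s) and which are freshly introduced, and (b) that the net effect preserves ``exactly twice, opposite polarity''. Once the induction goes through for $\mathbf{A.L}$, the additional rules for $\Sigma\subseteq\{(T),(4),(B),(D),(C)\}$ are handled by the same three-group analysis, using that $\nomk$ in $C,D$ and $\nomj$ in the relevant rules satisfy the freshness conditions stated in the table, which completes the proof for all $\mathbf{A.L}\Sigma$.
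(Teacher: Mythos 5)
Your overall strategy---induction on the height of the derivation, with a rule-by-rule check that each rule either introduces two occurrences of a fresh label with opposite polarity, removes both occurrences of an existing label (using the freshness side conditions to rule out orphan occurrences in the parametric context), or leaves the label occurrences untouched---is exactly the paper's argument, and your attention to the eigenvariable side conditions is if anything more explicit than the published proof.

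There is, however, one genuine problem: your treatment of the cut rules. You dismiss them by remarking that ``the induction is on cut-free derivations anyway by Theorem~\ref{thm:MetaCutElimination}.'' This is circular in the paper's development: Lemma~\ref{lem:exacttwo} is used to prove Proposition~\ref{prop:twin property} and the display property (Proposition~\ref{thm:DisplayPropertyAL}), which is condition C$'_5$ of Definition~\ref{def:ProperLabelledCalculi}; that condition is needed for Corollary~\ref{thm:ALIsProper}, which is what licenses applying Theorem~\ref{thm:MetaCutElimination} to $\mathbf{A.L}\Sigma$ in the first place. So you cannot assume cut-freeness here; the lemma must hold for arbitrary derivable sequents, cut included. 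The cut case also raises the one point where ``up to renaming'' does real work: the two premises $\Gamma \vd \nomj \leq A, \Delta$ and $\Gamma', \nomj \leq A \vd \Delta'$ are derived independently, so a nominal or conominal may accidentally occur in both $\Gamma, \Delta$ and $\Gamma', \Delta'$, giving four occurrences in the conclusion unless one first $\alpha$-renames the labels of one premise apart from those of the other. After that renaming, cut falls under your case (ii): the two displayed occurrences of the cut label (one per premise, of opposite polarity) are exactly the ones eliminated, and everything else is passed through. Adding this direct treatment of cut closes the gap and recovers the paper's proof.
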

\begin{proof}
The proof is by induction on the height of the derivation. The statement is trivially  true for the axioms. All the rules in the calculi with the exception of the cut-rules either (a) introduce two occurrences of a new label,  (b) eliminate two occurrences of the same existing label, or (c) keep the labels in the sequent intact.  Before applying the same reasoning to Cut$_{\,\nomj\leq A}$ and Cut$_{\,A\leq\cnomm}$, we must take care of renaming nominals and conominals in $\Gamma,\Gamma',\Delta,\Delta'$ to avoid conflicts. Thus, up to substitution, any variable occurring in a derivable sequent occurs exactly twice.
\end{proof}
To prove that $\mathbf{A.L}\Sigma$ has the display property (see Definition \ref{def:Display}) we need the following definition:

\begin{definition}
\label{def:twin}
Let $s= \Gamma \vd \Delta$  be any sequent. For any structure $\sigma= \nomj \leq \NCT$,  (resp.~$\sigma=\NCT \leq \cnomm$) in $\Gamma$, we say that it has a {\em $\nomj$-twin} (resp.~{\em $\cnomm$-twin}) iff there exists  exactly one occurrence of $\nomj$ (resp.~$\cnomm$) in $\Delta$. 
\end{definition}

\begin{proposition} \label{prop:twin property}
For any derivable sequent $s=\Gamma \vd \Delta$ and  any structure $\sigma = \nomj \leq \NCT$ (resp.~$\sigma = \NCT \leq \cnomm$)  in $\Gamma$  there exists a sequent $s' =\Gamma' \vd \Delta'$ which is interderivable  with $s$ such that $ \sigma \in \Gamma'$ and  $\sigma$ has a $\nomj$-twin (resp.~$ \cnomm$-twin) in $s'$. 
\end{proposition}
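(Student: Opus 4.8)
The plan is to prove Proposition \ref{prop:twin property} by induction on the height of the derivation of $s=\Gamma\vd\Delta$, exploiting Lemma \ref{lem:exacttwo}: in a derivable sequent every nominal and conominal occurs exactly twice and with opposite polarity. So fix $\sigma=\nomj\leq\NCT$ occurring in $\Gamma$ (the case $\sigma=\NCT\leq\cnomm$ is symmetric). By Lemma \ref{lem:exacttwo} there is exactly one other occurrence of $\nomj$ in $s$; if that occurrence is in $\Delta$, then $\sigma$ already has a $\nomj$-twin and we take $s'=s$. The substantive case is when the second occurrence of $\nomj$ is also in $\Gamma$ — say inside a structure $\tau$ — and we must transform $s$ into an interderivable $s'$ in which that companion occurrence has been moved to the consequent. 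The tool for this is the switch rules, which are exactly designed to flip an approximant from precedent to succedent position; crucially they are invertible (as noted after the soundness computation for S$\nomj\cnomm$ and in the calculus presentation), so applying a switch rule and its inverse gives interderivability rather than mere derivability.

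Concretely, I would first do a case analysis on the shape of the companion structure $\tau$ containing the second occurrence of $\nomj$. Since nominal terms are of the form $\nomi$, $\wdia\nomi$, or $\bdia\nomi$, and pure structures have the form $\nomj\leq\NCT$ or $\NCT\leq\cnomm$ (with the side condition that the same nominal/conominal does not occur on both sides of a single pure structure), the companion occurrence of $\nomj$ sits either in a labelled formula $\nomj\leq A$, or in a pure structure $\nomj\leq\NCT'$ or $\NCT'\leq\cnomm$ with $\NCT'$ built from $\nomj$ (e.g.\ $\wdia\nomj$ or $\bdia\nomj$), all of which occur in $\Gamma$. For each shape there is a matching switch rule — S$\cnomm$, S$\cnomm\cnomm$, S$\cnomm\NCT$, S$\NCT\cnomm$, S$\NCT\NCT'\cnomm$ (together with their duals) — whose conclusion-to-premise reading takes a sequent with $\nomj$ appearing twice in the antecedent (once in $\sigma$, once in $\tau$) to one where $\sigma$ stays in the antecedent and the $\tau$-part has been rewritten into the consequent with a fresh conominal $\cnomm$. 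That rewritten occurrence is the required $\nomj$-twin: $\sigma$ still lies in the new antecedent $\Gamma'$, and $\nomj$ now occurs exactly once in the new consequent $\Delta'$. Because each switch rule is invertible, $s'\vd s$ and $s\vd s'$ are both derivable, so $s$ and $s'$ are interderivable as required. One has to check that the side conditions of the relevant switch rule (the fresh conominal not occurring elsewhere) can be met, but this is immediate after renaming, since by Lemma \ref{lem:exacttwo} no clash of labels arises.

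I would organize the write-up as: (i) invoke Lemma \ref{lem:exacttwo} to reduce to the case where both occurrences of $\nomj$ are in $\Gamma$; (ii) a short lemma-style enumeration of the possible shapes of the companion structure $\tau$; (iii) for each shape, exhibit the single switch-rule application (and its inverse) that produces $s'$, verifying the side condition by a renaming remark; (iv) conclude that $\sigma\in\Gamma'$ and has a $\nomj$-twin in $s'$, and that $s\dashv\vdash s'$. The case of $\sigma=\NCT\leq\cnomm$ is handled by the dual switch rules and needs only a remark.

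The main obstacle I anticipate is the bookkeeping in the shape analysis of $\tau$: one must be careful that terms $\NCT$ appearing in $\sigma$ or $\tau$ may themselves be compound ($\wdia\nomi$, $\bdia\nomi$), so that the "other occurrence of $\nomj$" might be buried inside a modal term, requiring the $\NCT$-variants of the switch rules (S$\NCT\cnomm$, S$\NCT\NCT'\cnomm$, etc.) rather than the plain ones; and one must confirm that in every such configuration there is indeed a switch rule whose premise/conclusion pattern matches, i.e.\ that the switch rules are \emph{complete} for the "one flip" operation needed here. I expect this to go through by a finite inspection of the switch-rule table, but it is the step where a missing case would show up, so it deserves the most care. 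A secondary, minor subtlety is ensuring that the freshness side conditions on S$\cnomm$, S$\nomj$ (and the "$\nomj,\cnomm$ do not appear in $\Gamma,\Delta$" conditions on the other switch rules) are genuinely satisfiable; this follows from Lemma \ref{lem:exacttwo} after an innocuous renaming, and should be stated explicitly but not belabored.
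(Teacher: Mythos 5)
Your overall strategy --- reduce via Lemma \ref{lem:exacttwo} to the case where the second occurrence of $\nomj$ lies in $\Gamma$, and then ``flip'' the companion structure into the consequent with a single switch rule --- founders exactly at the step you flag as the main risk, and the problem is not mere bookkeeping. Every switch rule, read in either direction, acts on a \emph{matched pair} of nonparametric structures sharing an approximant across the turnstile: e.g.\ S$\nomj\NCT$ needs both $\NCT \leq \cnomm$ in the antecedent \emph{and} a companion $A \leq \cnomm$ in the consequent for the same $\cnomm$, and the same is true of S$\NCT\cnomm$, S$\NCT\NCT'\cnomm$, etc. So to move the companion structure $\tau$ (say $\NCT \leq \cnomm$ with $\nomj$ buried in $\NCT$) out of $\Gamma$, you need $\tau$ itself to already possess a $\cnomm$-twin in $\Delta$ --- i.e.\ you need the very property you are trying to establish, but for $\tau$'s own approximant. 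A single switch application therefore does not suffice in general, and recursing on ``the companion of the companion'' has no evident well-founded measure when one works only with the end-sequent: nothing in a static analysis of $s$ rules out a cycle of antecedent structures each waiting for the next to acquire a twin. This is precisely why the paper's proof is a genuine induction on the derivation: conclusions of initial rules have all twin properties, most rules preserve them, and the rules that destroy a twin (the term-switch rules and rule $(4)$) do so only by introducing a pure structure that, \emph{at the moment of its introduction}, has a twin --- so the inverse switch (possibly preceded by an adjunction) can always be applied to reach an interderivable sequent. The well-foundedness comes from derivation height, not from the sequent.

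Two smaller points. First, although you announce an induction on the height of the derivation, your argument never actually uses it --- it is a static transformation of the end-sequent --- so that induction would have to be carried out in earnest to close the gap above. Second, the additional rules $(4)$, $(D)$ and $(C)$ of $\mathbf{A.L}\Sigma$ also move or delete occurrences of nominals and must be checked separately (the paper repairs $(4)$ by an explicit adjunction-plus-switch derivation and treats $(D)$ by a direct argument); your case analysis of the possible shapes of $\tau$ does not cover them.
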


\begin{proof}
A sequent $s$ is said to have the {\em $\nomj$-twin property} (resp.\ $\cnomm$-twin property) iff every structure of the form $\nomj \leq \NCT$ (resp.~$\NCT \leq \cnomm$) has a $\nomj$-twin (resp.~$\cnomm$-twin) in $\Delta$. Notice that the conclusions of initial rules satisfy all twin properties trivially. We say that an inference rule preserves the twin property if the premises having a certain twin property implies that the conclusion has the same twin property. 

All the rules in the basic calculus which do not involve switching of  structural terms (i.e.~all the rules except for the rules S$\NCT\cnomm$, S$\cnomm\NCT$,  S$\NCT\nomj$, S$\nomj\NCT$,  S$\NCT\NCT'\cnomm$,  S$\nomj\NCT\NCT'$) and the rules $T$, $B$, and $C$ preserve the twin property, since, for each nominal $\nomj$ or conominal $\cnomm$ in the rule, one of the following holds:
\begin{enumerate}
    \item each occurrence of $\nomj$ or $\cnomm$ remains on the same side in the conclusion as it was in the premise (adjunction rules, rules for propositional connectives, $T$, $B$);
    \item exactly two occurrences of $\nomj$ (resp.~$\cnomm$) are eliminated (cut,  S$\cnomm$, S$\nomj$, $\wbox_S$, $\wdia_P$, $D$, $C$);\footnote{Given the result in Lemma \ref{lem:exacttwo}, notice that this is the same as saying that {\em all} occurrences of $\nomj$ (resp.~$\cnomm$) are eliminated, except in the case of cut rules, which have two premises. But, if the cut formula has a $\nomj$-twin (resp.\ $\cnomm$-twin) in $\Gamma$ in the one premise and in $\Delta'$ in the other, the conclusion  $\Gamma,\Gamma'\vd\Delta,\Delta'$ will still have the $\nomj$-twin (resp.\ $\cnomm$-twin) property.}
    \item $\nomj$ (resp.~$\cnomm$) does not occur in the premise, and exactly two occurrences of $\nomj$ (resp.~$\cnomm$) are added in the conclusion, one in the antecedent and one in the consequent of the conclusion ($\top_\wbox$,$\bot_\wdia$,$\wbox_P$,$\wdia_S$, $C$).
\end{enumerate}
Notice that rule $C$ occurs both in items 2 and 3 and, in particular, we can assume that the nominal $\nomh$ in the conclusion of $C$ is fresh (see the proof of Lemma \ref{thm:PreservationOfPrincipalFormulasApproximants}).  

In the case one of the S$\NCT\cnomm$, S$\cnomm\NCT$,  S$\NCT\nomj$, S$\nomj\NCT$,  S$\NCT\NCT'\cnomm$,  S$\nomj\NCT\NCT'$  rules is applied, the twin property can be broken, as nominals (resp.~conominal) contained in the structural terms $\NCT,\NCT'$ switch side without their twin nominals (resp.~conominals) doing the same. Let  $R=\frac{\Gamma_1 \vd \Delta_1}{\Gamma_2 \vd \Delta_2}$ be an application of any of these rules.  Let $\sigma \in \Gamma_1 \cap \Gamma_2 $ be a structure which has a twin in $\Delta_1$ but not in $\Delta_2$.  This is only possible if $\sigma$ is of the form $\nomi \leq \NCT_1$  (resp.~$\NCT_1 \leq \cnomn$) and there exists a $\sigma'$ of the form $\NCT \leq \cnomm$ (resp.~ $\nomj \leq \NCT$) for some $\NCT$ containing $\nomi$ (resp.~$\cnomn$) which will appear in $\Gamma_2$ with $\nomj$ (resp.~$\cnomm$) introduced fresh. We can switch the term $\NCT$ back to the right by using the switch rule applicable to the conclusion of $R$, which gives a sequent of the same form as $\Gamma_1\vd\Delta_1$, but with a fresh conominal (resp.\ nominal) in the place of $\cnomm$ (resp.\ $\nomj$). Here, we show the proof for the rules S$\NCT\NCT'\cnomm$ and S$\cnomm\NCT$, the proof for other rules being similar.
\begin{center} 
{\small
\begin{tabular}{rl}
\AXC{$\Gamma,\nomj\leq\NCT'\vd\nomj\leq\NCT,\Delta$}
\LL{S$\NCT\NCT'\cnomm$}
\UIC{$\Gamma,\NCT\leq\cnomm\vd\NCT'\leq\cnomm,\Delta$}
\LL{S$\nomj\NCT\NCT'$}
\UIC{$\Gamma,\nomj'\leq\NCT'\vd\nomj'\leq\NCT,\Delta$}
\DP
 \ & \  
\AXC{$\Gamma,A\leq\cnomm\vd\NCT\leq\cnomm,\Delta$}
\RL{S$\NCT\nomj$}
\UIC{$\Gamma,\nomj\leq\NCT\vd \nomj\leq A,\Delta$}
\RL{S$\cnomm\NCT$}
\UIC{$\Gamma,A\leq\cnomm'\vd\NCT\leq\cnomm',\Delta$}
\DP
 \\
\end{tabular}
}
\end{center}
Let $s' =\Gamma_3 \vd \Delta_3$ be any sequent derived from $\Gamma_2 \vd \Delta_2$ such that $\sigma\in \Gamma_3$. If $\sigma' \notin \Gamma_3$, it must have been switched to the right by a switch rule, as all other rules leave pure structures on the left intact. However, whenever a switch rule is applied, the term $\NCT$ occurs on the right of the sequent and has a twin. Thus, if the last rule of the derivation is such a switch rule, then $\sigma$ has a twin in $s'$.  If $\sigma' \in \Gamma_3$ and $\sigma'$ has a twin structure, then we can switch $\NCT$ to right by the  application of an appropriate switch rule leading to a interderivable sequent in which $\sigma$ has a twin. Thus, reasoning by induction, it is enough to show that $\sigma'$ has a twin when it occurs for the first time (by the application of the switch rule). This is immediate from the fact that the label occurring in $\sigma' \in \Gamma_3$ in any switch rule is of the form $\nomj \leq \NCT$ or $\NCT \leq \cnomm $ where $\nomj$ or $\cnomm$ is fresh and also occurs in $\Delta_3$. Thus switch rules preserve the twin property.

Let  $R=\frac{\Gamma_1 \vd \Delta_1}{\Gamma_2 \vd \Delta_2}$ be an application of rule $(4)$.  Let $\sigma \in \Gamma_1 \cap \Gamma_2 $ be a structure which has a twin in $\Delta_1$ but not in $\Delta_2$. This is only possible if $\sigma$ is of the form $\nomj \leq \NCT \in \Gamma$ for some structure $\NCT$. In this case, after the application of $4$ we can reintroduce the relevant twin structure $\wdia \nomj \leq \cnomm$ in the following way.
\begin{center}
{\small 
\AXC{$\nomj \leq \NCT \in \Gamma \vd \wdia \nomj \leq \cnomm$}
\LL{\fns $4$}
\UIC{$\nomj \leq \NCT \in \Gamma, \nomh \leq \wdia \nomj \vd \wdia \nomh \leq \cnomm$} 
\RL{\fns $\wdia \dashv \bbox$}
\UIC{$\nomj \leq \NCT \in \Gamma, \nomh \leq \wdia \nomj \vd \nomh \leq \bbox\cnomm$}
\LL{\fns S$\NCT\NCT'\cnomn$}
\UIC{$\nomj \leq \NCT \in \Gamma,\bbox\cnomm \leq \cnomn  \vd \wdia \nomj  \leq\cnomn$}
\DP
 }
\end{center}
The rest of the proof now follows by the same inductive algorithm.

\end{proof}
We can now prove the display property of the calculus. 
\begin{proposition}
\label{thm:DisplayPropertyAL}
If $s =\Gamma \vd \Delta $ is a derivable $\mathbf{A.L}\Sigma$-sequent, then every structure $\sigma$ occurring in $s$ is {\em displayable} (see Definition \ref{def:Display}).
\end{proposition}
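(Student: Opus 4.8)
The plan is to establish the proposition by induction on the number of structure-occurrences appearing in $s$, using the switch rules, the adjunction rules, and the structural rules $\aatop_\wbox,\abot_\wdia$ as the ``display postulates'' of $\mathbf{A.L}\Sigma$, and exploiting the rigid combinatorial shape of derivable sequents provided by Lemma~\ref{lem:exacttwo} and Proposition~\ref{prop:twin property}. First I would reduce, by the symmetry of the rule-set under simultaneously swapping the two sides of $\vd$ and the roles of nominals and conominals, to the case $\sigma\in\Gamma$; the sub-cases according to whether $\sigma$ is a labelled formula $\nomj\leq A$ or $A\leq\cnomm$ or a pure structure $\nomj\leq\NCT$ or $\NCT\leq\cnomm$ are then treated in parallel. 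When $\sigma$ is a pure structure I would first invoke Proposition~\ref{prop:twin property} to replace $s$ by an interderivable sequent in which $\sigma$ has a twin, so that by Lemma~\ref{lem:exacttwo} the relevant approximant of $\sigma$ occurs exactly once on the opposite side of $\vd$.

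For the inductive step it helps to view the structure-occurrences of $s$ as the vertices of a graph, with an edge joining two structures whenever they share an approximant. By Lemma~\ref{lem:exacttwo} each approximant contributes precisely one edge, and since labelled formulas carry a single approximant and pure structures exactly two (the nominal or conominal and the base of the term, necessarily distinct by the side condition on pure structures), this graph is a disjoint union of simple paths whose endpoints are labelled formulas, together with possibly some cycles of pure structures. The base case, where $\sigma$ already exhausts one side of $\vd$, is immediate. For the step I would distinguish two situations. If some structure $\rho\neq\sigma$ lies in a component not containing $\sigma$, I contract that component: starting from one of its extremities I apply the adjunction rules $\wdia\dashv\bbox$, $\bdia\dashv\wbox$ and their inverses (together with $\aatop_\wbox,\abot_\wdia$ where $\aatop$ or $\abot$ occurs) to transport any modality wrapping the shared approximant of the neighbouring structure to the outside, and then a single application of $\mathrm{S}\cnomm$ or $\mathrm{S}\nomj$ --- the only switch rules that lower the number of structures --- absorbs the extremity into its neighbour, strictly decreasing the structure-count while leaving $\sigma$ in place, so that the inductive hypothesis applies. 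If instead every non-$\sigma$ structure shares a component with $\sigma$, I slide $\sigma$ towards an end of its path (first breaking any cycle): each step is an application of one of $\mathrm{S}\NCT\nomj$, $\mathrm{S}\nomj\NCT$, $\mathrm{S}\NCT\NCT'\cnomm$, $\mathrm{S}\nomj\NCT\NCT'$ (or $\mathrm{S}\cnomm\cnomm$, $\mathrm{S}\nomj\nomj$), preceded where necessary by adjunction steps exposing the modality linking $\sigma$ to its neighbour, and it carries that neighbour across $\vd$, bringing $\sigma$ one position closer to standing alone; finitely many steps then display $\sigma$.

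The routine part is the rule-by-rule check that each indicated application meets its side conditions --- in particular that the freshness requirements in $\mathrm{S}\cnomm$, $\mathrm{S}\nomj$, $\mathrm{S}\cnomm\cnomm$, $\mathrm{S}\nomj\nomj$, $\mathrm{S}\NCT\NCT'\cnomm$ and $\mathrm{S}\nomj\NCT\NCT'$ can always be arranged, which follows from Lemma~\ref{lem:exacttwo} since the offending label occurs exactly twice. The hard part, exactly as in the proof of Proposition~\ref{prop:twin property}, is the interaction between the sliding and contracting manoeuvres and the modal terms $\wdia\nomj$, $\bdia\nomj$, $\wbox\cnomm$, $\bbox\cnomm$: before a switch rule can act on a shared approximant, any modality enclosing it must first be moved onto the structure on the other side of $\vd$ by the appropriate adjoint, and one must verify that doing so neither re-enlarges the component nor produces a term that cannot be unwound further. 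Carrying this bookkeeping out uniformly for all four forms of $\sigma$ and for every $\Sigma\subseteq\{(T),(4),(B),(D),(C)\}$ --- reusing, for the rules $4$, $C$ and $D$, the argument already given in the proof of Proposition~\ref{prop:twin property} --- is where essentially all the effort goes.
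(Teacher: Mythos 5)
Your core ingredients are the right ones --- Lemma \ref{lem:exacttwo}, Proposition \ref{prop:twin property}, and the observation that an adjunction step must precede any switch on an approximant buried under a modality --- and your graph-theoretic reading of Lemma \ref{lem:exacttwo} (labelled formulas have degree one, pure structures degree two, hence paths and possibly cycles) is a correct global restatement of the combinatorics. But the inductive scaffolding you build around this does not go through. The contraction step fails: the only count-reducing switch rules, S$\cnomm$ and S$\nomj$, require the pure structure being absorbed to be literally $\nomj \leq \cnomm$ with \emph{both} approximants atomic, and the adjunction rules only relocate a modality from the nominal side to the conominal side of a pure structure in the succedent (e.g.\ $\wdia\nomj\leq\cnomm$ to $\nomj\leq\bbox\cnomm$ and back); they never strip it off. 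So a component whose extremity neighbours, say, $\nomj\leq\bbox\cnomm$ cannot be contracted, and a component consisting of two labelled formulas $\nomj\leq A$ and $\nomj\leq B$ cannot be contracted at all (S$\cnomm\cnomm$ and S$\nomj\nomj$ preserve the structure count). Hence the induction on the number of structure occurrences breaks already in its first case. Likewise, ``breaking a cycle'' of pure structures is not supported by any rule: S$\NCT\NCT'\cnomm$ and S$\nomj\NCT\NCT'$ keep both active structures pure and merely trade the shared approximant for a fresh one, so a cycle stays a cycle. Finally, be careful with ``sliding $\sigma$'': any switch in which $\sigma$ itself is nonparametric replaces $\sigma$ by a differently approximated structure, so $\sigma$ would no longer occur in the resulting sequent, whereas Definition \ref{def:Display} requires $\sigma$ to occur in display in $s'$; only the \emph{neighbour} may be made active.

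The deeper point is that none of this global machinery is needed. Whether $\sigma$ is displayable depends only on the \emph{single} other occurrence of its approximant $\nomj$ (resp.\ $\cnomm$), which is unique by Lemma \ref{lem:exacttwo}; other components and the length of $\sigma$'s own path are irrelevant. The paper's proof is a short case analysis on the form of that one occurrence, using the invariant that $\wdia\nomj\leq\cnomn$, $\bdia\nomj\leq\cnomn$, $\nomi\leq\wbox\cnomm$, $\nomi\leq\bbox\cnomm$ only ever occur in the succedent (one adjunction displays $\nomj$, resp.\ $\cnomm$), while $\nomi\leq\wdia\nomj$, $\nomi\leq\bdia\nomj$, $\wbox\cnomm\leq\cnomn$, $\bbox\cnomm\leq\cnomn$ only occur in the antecedent; in the latter case a single switch --- enabled by Proposition \ref{prop:twin property} applied to that \emph{neighbouring} pure structure, not to $\sigma$ --- followed by one adjunction suffices. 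If you drop the induction and the contraction and cycle manoeuvres and retain only this local argument, your proof collapses to the paper's.
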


\begin{proof}\label{proof:thm:DisplayPropertyAL}
We prove the display property only for the structures of the form $\nomj \leq \NCT$ and $\nomj \leq A$. The proof for structures of the form $ \NCT \leq \cnomm$ and $A \leq \cnomm$ are dual. Let  $\sigma$ be a structure of the form  $\nomj \leq \NCT$ or $\nomj \leq A$.  By Lemma \ref{lem:exacttwo}, $\nomj$  occurs exactly  twice in any derivable sequent. Let $\sigma'$ be the structure containing the other occurrence of $\nomj$. If $\sigma'$ is a labelled formula we are done. The set of well-formed pure structures  containing $\nomj$ is the following: $\{\nomj \leq \cnomm, \wdia\nomj\leq\cnomm, \bdia \nomj\leq \cnomm, \nomj\leq \wbox \cnomm, \nomj\leq\bbox \cnomm, \nomi\leq\wdia\nomj, \nomi\leq\bdia\nomj\}$.
For any derivable sequent $s = \Gamma \vdash \Delta$, it is easy to verify the following (by inspection of all the rules in $\mathbf{A.L}$): 
\begin{itemize}
\item[(a)] if $\wdia\nomj\leq\cnomn$ (resp.~$\nomi\leq \wbox \cnomm$) occurs in $s$, then it occurs in $\Delta$, given that $\wdia_S$ and $\bot_\wdia$ (resp.~$\wbox_P$ and $\top_\wbox$) are the only rules introducing such a structure;
\item[(b)] if $\bdia \nomj \leq \cnomn$ (resp.~$\nomi \leq \bbox \cnomm$) occurs in $s$, then it occurs in $\Delta$, given that $\bdia$ and $\bbox$ can be introduced only via adjunction rules to structures  $\wdia\nomj\leq\cnomn$ and $\nomi\leq \wbox \cnomm$ which only occur in $\Delta$ by (a);
\item[(c)] if any pure structure $t \in\{\nomi\leq\wdia\nomj, \nomi\leq\bdia\nomj, \wbox\cnomm\leq\cnomn, \bbox\cnomm\leq\cnomn\}$ occurs in $s$, then it occurs in $\Gamma$, given that they can only be introduced via a switch rule (namely, S$\NCT\cnomm$, S$\NCT\nomj$, S$\NCT\NCT'\cnomm$, S$\nomj,\NCT\NCT'$).
\end{itemize}

Case~(a) and (b). If $\nomj$ (resp.\ $\cnomm$) occurs in some pure structure $\wdia\nomj\leq\cnomn$ or $\bdia \nomj\leq \cnomn$ (resp.\ $\nomi\leq \wbox \cnomm$ or $\nomi\leq\bbox \cnomm$), then it occurs in $\Delta$ and it is displayable through a single application of an adjunction rule.

Case~(c). If $\nomj$ (resp.~$\cnomm$) occurs in some pure structure $\nomi\leq\wdia\nomj$, $\nomi\leq\bdia\nomj$ (resp.~$\wbox\cnomm\leq\cnomn$, $\bbox\cnomm\leq\cnomn$), then it occurs in $\Gamma$. If we can apply a switch rule moving the relevant complex term to $\Delta$, we reduce ourselves to the previous cases (a) or (b) and we are done. We will consider the case where a pure structure $\nomi\leq\wdia\nomj$ occurs on the left. All the other cases are treated analogously.  By Proposition \ref{prop:twin property}, there exists a sequence $s' =\Gamma' \vd \Delta'$ such that $s'$ is interderivable with $s$,  $\nomi\leq\wdia\nomj$ occurs in $\Gamma'$ and has an $\nomi$-twin (resp.~$\cnomn$-twin) in $s'$. Thus, $\nomi$ occurs in $\Delta'$ by definition. 
By (a) and (b), $\nomi$ can occur in $\Delta'$ in one of the structures $\nomi \leq A$, $\nomi \leq \cnomm$  $\wdia \nomi \leq \cnomm$, or $\bdia \nomi \leq \cnomm$. If $\nomi$ occurs in $\nomi \leq A$ or  $\nomi \leq \cnomm$, we can apply a switch rule with $\nomi \leq \wdia\nomj$ to reduce ourselves to previous cases. If $\nomi$ occurs in the structure of the form $\wdia \nomi \leq \cnomm$ or $\bdia \nomi \leq \cnomm$, we first apply adjunction and then proceed in the same way. This concludes the proof of display property for the basic calculus. 

In case of the axiomatic extensions, we need to argue that the added rules preserve the property in Proposition \ref{prop:twin property}. Notice that all the additional rules preserve the Lemma \ref{lem:exacttwo}. 

In case of rules {\em (T)}, {\em (B)}, and {\em (C)} no variables nominals or conominals switch side thus the twin structures are preserved by these rules. 
{\em (4)}: The rule 4 moves $\wdia \nomj$ from the right to the left. Thus, the only structure in $\Gamma$ which loses the twin property is some  structure $\sigma$ in $\Gamma$ containing  $\nomj$. However, just like in the case of the switch rules we can switch $\nomj$ back to the right by applying  an adjunction followed by a switch rule on $\nomh$ as follows.
\begin{center} 
{\small
\AXC{$\Gamma,  \nomh \leq \wdia\nomj \vd \wdia \nomh \leq \cnomm $}
\LL{\fns $\wdia \dashv \bbox$}
\UIC{$\Gamma,   \nomh \leq \wdia\nomj \vd  \nomh \leq \bbox\cnomm $}
\LL{S$\NCT\NCT'\cnomm$}
\UIC{$\Gamma,   \bbox\cnomm  \leq  \cnomn  \vd  \wdia\nomj  \leq \cnomn$}
\DP
}
\end{center}
Thus, the twin of $\sigma$ reappears. We can now argue that the existence of an interderivable sequent with a twin is preserved in a derivation containing an application of this rule in similar way as we did with the switch rules.

{\em (D): } In case of rule {\em D}, as the premise contains the structure $\nomk \leq \bdia \nomj$, there must be another structure $\sigma \in \Gamma$ containing $\nomj$ of the form $\nomj \leq A$ or $\nomj \leq \NCT$. The premise must be interderivable with a sequent in which $\sigma$ has a twin. However, in the conclusion $\nomk$ does not occur and $\nomj \leq \wbox \cnomm$ occurs on the right. Thus, $\sigma$ has a twin in the conclusion. All other nominals and conominals in the premise stay on the same side in the conclusion. Thus, rule {\em D} preserves the property of existence of interederivable sequent with the twin property.

\end{proof}

To show that $\mathbf{A.L}\Sigma$ is a proper labelled calculus, we need to verify that $\mathbf{A.L}\Sigma$ satisfies each condition in Definition \ref{def:ProperLabelledCalculi}. The verification of the conditions C$6$ and C$7$ requires to preliminarily show the following:

\begin{lemma}[Preservation of principal formulas' approximants]
\label{thm:PreservationOfPrincipalFormulasApproximants} 
If the sequents $s$ and $s'$ occur in the same branch $b$ of an $\mathbf{A.L}\Sigma$-derivation $\pi$, 
$\nomj \leq A \in s$ and $\nomi \leq A \in s'$ (resp.~$A \leq \cnomm \in s$ and $A \leq \cnomn \in s'$) are {\em congruent} in $b$ (see Definition \ref{def:Display}),  $\nomj \leq A$ (resp.~$A \leq \cnomm$) is principal in $s$ and $\nomi \leq A$ (resp.~$A \leq \cnomn$) is in display in $s'$, then the term occurring in $\nomi \leq A$ (resp.~$A \leq \cnomn$) in $\pi$ can be renamed in a way such that $\nomi = \nomj$ (resp.~$\cnomn = \cnomm$), and the new derivation $\pi'$ is s.t.~$\pi' \equiv \pi$ modulo a renaming of some nominal or conominal.
\end{lemma}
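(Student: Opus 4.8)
The plan is to trace the proof-congruence class of $\nomj \leq A$ along the branch $b$ (the conominal case $A \leq \cnomm$ is order-dual) and to show that a finite sequence of renamings of $\pi$ makes the ``approximant'' of that class constant, hence equal to $\nomj$ at $s'$. First I would settle the geometry. Since $\nomj \leq A$ is principal in $s$, the sequent $s$ is the conclusion of the inference introducing it, and by the analysis of the rules (Definition~\ref{def:ParametersCongruenceHistoryTree}) none of the constituents of its premise(s) is proof-congruent to $\nomj \leq A$ — the occurrences of the immediate subformulas of $A$ there are auxiliary, not congruent to $\nomj \leq A$. Hence the congruence class of $\nomj \leq A$ does not extend past $s$ towards the leaves, so $s'$ lies on the segment of $b$ between $s$ and the root, and (by inspection of the schemes, along a single branch the proof-congruence relation restricted to $b$ is a simple path) the chain linking the two occurrences can be written $\nomj \leq A = \sigma_0, \sigma_1, \dots, \sigma_n = \nomi \leq A$, where consecutive $\sigma_k, \sigma_{k+1}$ are locally congruent in the inference $r_k$ that carries $\sigma_k$ in its premise and $\sigma_{k+1}$ in its conclusion.

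Next I would run through the rule schemes and record that in every rule of $\mathbf{A.L}\Sigma$ a parametric labelled formula is reproduced verbatim, with the same underlying formula and the same approximant. Therefore $\sigma_k$ and $\sigma_{k+1}$ can differ in their approximant only when they form the nonparametric, locally congruent pair of a switch rule $r_k$ (Remark~\ref{AnalysisOfSwitchRules}), in which case $r_k$ replaces a labelled formula $\nomh \leq A$ by $A \leq \cnomo$, or conversely. Thus along the chain the approximant is constant on each maximal switch-free stretch and changes only at the finitely many switch inferences $r_{k_1}, \dots, r_{k_\ell}$ lying on $b$ between $s$ and $s'$. I would then conclude by a downward induction on $\ell$. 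If $\ell = 0$, the approximant is $\nomj$ throughout and $\nomi = \nomj$ already. For $\ell > 0$, consider the topmost switch jump $r_{k_1}$: above it the approximant is still $\nomj$, below it it becomes some $x$; I replace $x$ by $\nomj$ throughout the sub-derivation of $\pi$ below $r_{k_1}$ (after first renaming away, if necessary, any conflicting occurrence of $\nomj$ in that sub-derivation), obtaining $\pi_1 \equiv \pi$ modulo a renaming in which $r_{k_1}$ no longer changes the approximant; $\pi_1$ then has $\ell - 1$ switch jumps on the relevant chain, and the induction hypothesis applies. The hypothesis that $\sigma_n = \nomi \leq A$ is in display in $s'$ is what guarantees that this can be arranged so that the final occurrence becomes literally $\nomj \leq A$, with no further switch step having to be inserted above $s'$.

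The main obstacle — indeed the only genuinely delicate point — is the bookkeeping inside the induction step: one must verify that each of these substitutions leaves every sequent well-formed (in particular that no sequent ends up containing a nominal or conominal more than twice) and that the successive substitutions are mutually compatible. This is exactly where the side conditions of the switch rules (e.g.\ that the discarded approximant does not occur in the conclusion) together with Lemma~\ref{lem:exacttwo} do the real work: they confine each of the approximants occurring along the chain to a bounded stretch of $b$, so that each renaming touches only that stretch and is therefore harmless. Once this confinement is made precise, the induction goes through and yields the required $\pi'$.
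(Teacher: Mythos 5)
Your overall strategy (trace the congruence chain from $s$ down to $s'$, observe that the approximant can only change at switch inferences, then rename) is the same as the paper's, and you correctly isolate the well-formedness/side-condition bookkeeping as the delicate point. However, the induction step is broken. A single switch rule turns an occurrence $\nomh \leq A$ into $A \leq \cnomo$ (or conversely): it changes the \emph{type} of the approximant, from a nominal to a conominal or vice versa. So at your topmost switch jump $r_{k_1}$ the approximant $x$ immediately below it is a \emph{conominal}, and ``replacing $x$ by $\nomj$'' is not a legitimate renaming — it would produce ill-formed structures such as $A \leq \nomj$. Likewise, there is no renaming under which a switch rule ``no longer changes the approximant'': the rule's very shape forces the change. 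Hence the quantity $\ell$ cannot be decremented by one.

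What rescues the argument — and what the paper's proof uses — is a parity observation you never invoke: since both endpoints $\nomj \leq A$ and $\nomi \leq A$ carry a \emph{nominal} approximant, the number of switch inferences in $b'$ at which the traced occurrence is nonparametric must be \emph{even}, so one may reduce w.l.o.g.\ to exactly two such switches and rename the nominal reintroduced by the second switch of the pair to $\nomj$ (a nominal-for-nominal renaming). The remaining work is then to check the switch rules' side conditions after this renaming, i.e.\ that $\nomj$ does not already occur where it is forbidden; the paper arranges this by choosing fresh the nominals/conominals introduced by the non-switch rules $\wbox_P$, $\wdia_S$, $C$ and $4$ along the relevant stretch of the branch. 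Your appeal to Lemma~\ref{lem:exacttwo} and the side conditions gestures at this, but without the evenness reduction the induction as you set it up does not go through.
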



\begin{proof}
Assume that $\nomj \leq A \in s$ and $\nomi \leq A \in s'$ (resp.~$A \leq \cnomm \in s$ and $A \leq \cnomn \in s'$) are congruent in the branch $b$ of a derivation $\pi$, $\nomj \leq A$ (resp.~$A \leq \cnomm$) is principal in $s$ and $\nomi \leq A$ (resp.~$A \leq \cnomn$) is in display in $s'$. This means that there is a sub-branch $b' \subseteq b$ connecting $s$ and $s'$ and the height of $s$ is strictly smaller than the height of $s'$ in $\pi$, given that $\nomj \leq A$ (resp.~$A \leq \cnomm$) is principal in $s$. If $\nomj \leq A$ stays parametric in $b'$, then $\nomj = \nomi$ (resp.~$\cnomm = \cnomn$). If not, it means that $\nomj \leq A$ (resp.~$A \leq \cnomm$) is nonparametric in an even number of applications of switch rules occurring in $b'$, given that $\nomj \leq A$ and $\nomi \leq A$ (resp.~$A \leq \cnomm$ and $A \leq \cnomn$) are both approximated by a nominal (resp.~conominal) and a single application of a switch rules changes the nominal (resp.~conominal) approximating a formula into a conominal (resp.~nominal). W.l.o.g.~we can confine ourselves to consider a branch $b'$ with exactly two applications of switch rule involving $\nomj \leq A$ and $\nomi \leq A$ (resp.~$\cnomm \leq A$) as nonparametric structures. If  a rule $R$ is applied in $b'$, $R$ is not a switch rule, and $x$ is a nominal or conominal occurring in a nonparametric strucure in the conclusion of $R$ but not necessarily occurring in the premise(s) of $R$, then we can pick $x$ fresh in the entire sub-branch $b'' \subseteq b'$ connecting the conclusion of $R$ with the sequent $s$. In the case of $\mathbf{A.L}\Sigma$, such rule $R$ is either $\wbox_P$, $\wdia_S$, $C$, or $4$. Therefore, $\nomj$ (resp.~$\cnomm$) occurs neither in the premise nor in any parametric structure in the conclusion of $S$, where $S$ is any application of a switch rules in the branch $b'$ and s.t.~$\nomi \leq A$ (resp.~$A \leq \cnomn$) is a nonparametric structure in the conclusion of $S$. So, the side conditions of switch rules are satisfied, and in any application of such $S$ we can switch $\nomi$ for $\nomj$ (resp.~$\cnomn$ for  $\cnomm$).
\end{proof}


We now proceed to check all the conditions C$_1$-C$_8$ defining proper labelled calculi.

\begin{proof}
The fact that $\mathbf{A.L}\Sigma$ satisfies condition C$'_5$ is proved in Proposition \ref{thm:DisplayPropertyAL}. To show that $\mathbf{A.L}\Sigma$ satisfies condition C$_8$ we need to consider all possible principal cuts.

Below we consider cut formulas introduced by initial rules and we exhibit a new cut-free proof that is an axiom as well (notice that axioms are defined with empty contexts). The principal cut formula is $\nomj \leq \aatop$ or $\nomj \leq p$, $R \in \{\textrm{Id}_{\nomj \leq p}, \textrm{Id}_\aatop, \aatop_w\}$, and $x,y,z$ are instantiated accordingly to $R$ (the proof for $\abot \leq \cnomm$ or $p \leq \cnomm$, and $R \in \{\textrm{Id}_{p\leq m}, \textrm{Id}_\abot, \abot_w\}$ is similar and it is omitted). 


{\small
\begin{center}
\begin{tabular}{ccc}
\bottomAlignProof
\AXC{$\pi_1$}
\RL{\fns $R$}
\UIC{$\nomj \leq x \vd \nomj \leq y$}
\AXC{$\pi_2$}
\RL{\fns $R$}
\UIC{$\nomj \leq y \vd  \nomj \leq z$}
\LL{\fns Cut$_{\,\nomj \leq \aatop}$}
\BIC{$\nomj \leq x \vd \nomj \leq z$}
\DP
&
\bottomAlignProof
$\rightsquigarrow$
&
\ \ 
\bottomAlignProof
\AXC{$\pi_i$}
\RL{\fns $R$}
\UIC{$\nomj \leq x \vd \nomj \leq z$}
\DP 
 \\
\end{tabular}
\end{center}
 }

Below we consider formulas introduced by logical rules. The side conditions on logical rules impose that the parametric structures are in display or can be displayed with a single application of an adjunction rule, therefore we can provide the following proof transformations where the newly generated cuts are well-defined (in particular the cut formulas are in display) and of lower complexity (the new cut formulas are immediate subformulas of the original cut formulas). 

The principal cut formula is $\wbox A$ and $A$ is 
$\cnomm$-labelled formula (see Definition \ref{def:j-m-LabelledFormulas}).

\vspace{-0.3cm}

{\small
\begin{center}
\begin{tabular}{ccc}
\negthinspace\negthinspace
\negthinspace\negthinspace
\negthinspace\negthinspace
\negthinspace\negthinspace
\negthinspace\negthinspace
\negthinspace\negthinspace
\negthinspace\negthinspace
\negthinspace\negthinspace
\negthinspace\negthinspace
\negthinspace\negthinspace
\negthinspace\negthinspace
\negthinspace\negthinspace
\bottomAlignProof
\AXC{\ \, $\vdots$ \raisebox{1mm}{$\pi_1$}}
\noLine
\UIC{$\Gamma, A \leq \cnomm \vd \nomj \leq \wbox \cnomm, \Delta$}
\RL{\fns $\wbox_S$}
\UIC{$\Gamma \vd \nomj \leq \wbox A, \Delta$}
\AXC{\!\!\!\!\!\!\!\!\!\!\!\!$\vdots$ \raisebox{1mm}{$\pi_2$}}
\noLine
\UIC{$\Gamma' \vd A \leq \cnomm, \Delta'$}
\LL{\fns $\wbox_P$}
\UIC{$\Gamma', \nomj \leq \wbox A \vd \nomj \leq \wbox \cnomm,\Delta'$}
\LL{\fns Cut$_{\,\nomj\leq \wbox A}$}
\BIC{$\Gamma,\Gamma' \vd \Delta, \nomj \leq \wbox \cnomm, \Delta'$}
\DP

 & 
\bottomAlignProof
$\rightsquigarrow$ 
 & 
\bottomAlignProof
\AXC{\!\!\!\!\!\!\!\!\!\!\!\!$\vdots$ \raisebox{1mm}{$\pi_2$}}
\noLine
\UIC{$\Gamma' \vd A \leq \cnomm, \Delta'$}
\AXC{\ \, $\vdots$ \raisebox{1mm}{$\pi_1$}}
\noLine
\UIC{$\Gamma, A \leq \cnomm \vd \nomj \leq \wbox \cnomm,\Delta$}
\RL{\fns $\bdia \dashv \wbox$}
\UIC{$\Gamma, A \leq \cnomm \vd \bdia \nomj \leq \cnomm,\Delta$}
\RL{\fns Cut$_{\,A \leq \cnomm}$}
\BIC{$\Gamma,\Gamma'\vd \Delta, \bdia \nomj \leq \cnomm, \Delta'$}
\LL{\fns $\bdia \dashv \wbox^{-1}$}
\UIC{$\Gamma,\Gamma'\vd \Delta, \nomj \leq \wbox \cnomm, \Delta'$}
\DP 
 \\
\end{tabular}
\end{center}
 }

The principal cut formula is $\wbox A$ and $A$ is a 
$\nomj$-labelled formula (see Definition \ref{def:j-m-LabelledFormulas}): in this case the original cut is as in the proof above. The principal cut elimination transformation requires we perform a new cut on the immediate labelled subformulas $A \leq \cnomm$, but, given $A$ is a $\nomj$-labelled formula by assumption, only Cut$_{\nomj \leq A}$ can be applied, so we need first to apply the appropriate switch rules changing the approximants of the immediate subformulas $A$ on both branches. In branch $\pi_1$ of the original proof, we can apply adjunction and derive $\Gamma, A \leq \cnomm \vd \bdia \nomj \leq \cnomm$ where $\bdia \nomj \leq \cnomm$ is the twin structure (see Definition \ref{def:twin}) of $A \leq \cnomm$, so we can apply the rule S$\NCT\nomj$. In branch $\pi_2$, Proposition \ref{prop:twin property} ensures that $\Gamma' \vd A \leq \cnomm, \Delta'$ is interderivable with $ \Gamma'', x \leq \cnomm \vd A \leq \cnomm, \Delta''$ for some $x$ formula or $x$ pure structure, where $x \leq \cnomm$ is the twin structure of $A \leq \cnomm$. Therefore, we can apply the appropriate switch rule changing the approximant of $A$. The proof transformation is detailed below.

{\small
\begin{center}
\begin{tabular}{ccc}

 & 
\bottomAlignProof
$\rightsquigarrow$ 
 & 
\bottomAlignProof
\AXC{\ \, $\vdots$ \raisebox{1mm}{$\pi_1$}}
\noLine
\UIC{$\Gamma, A \leq \cnomm \vd \nomj \leq \wbox \cnomm,\Delta$}
\RL{\fns $\bdia \dashv \wbox$}
\UIC{$\Gamma, A \leq \cnomm \vd \bdia \nomj \leq \cnomm,\Delta$}
\RL{\fns S$\NCT\nomj$}
\UIC{$\Gamma, \nomi \leq \bdia \nomj \vd \nomi \leq A,\Delta$}
\AXC{\!\!\!\!\!\!\!\!\!\!\!\!$\vdots$ \raisebox{1mm}{$\pi_2$}}
\noLine
\UIC{$\Gamma' \vd A \leq \cnomm, \Delta'$}
\dashedLine
\RL{\fns Prop.~\ref{prop:twin property}}
\UIC{$\Gamma'', x \leq \cnomm \vd A \leq \cnomm, \Delta''$}
\RL{\fns S}
\UIC{$\Gamma'', \nomi \leq A \vd \nomi \leq x, \Delta''$}
\RL{\fns Cut$_{\,\nomi \leq A}$}
\BIC{$\Gamma, \Gamma'', \nomi \leq \bdia \nomj \vd \nomi \leq x, \Delta, \Delta''$}
\RL{\fns S}
\UIC{$\Gamma, \Gamma'', x \leq \cnomm \vd \bdia \nomj \leq \cnomm, \Delta, \Delta''$}
\dashedLine
\RL{\fns Prop.~\ref{prop:twin property}}
\UIC{$\Gamma, \Gamma' \vd \bdia \nomj \leq \cnomm, \Delta, \Delta'$}
\LL{\fns $\bdia \dashv \wbox^{-1}$}
\UIC{$\Gamma, \Gamma' \vd \nomj \leq \wbox \cnomm, \Delta, \Delta'$}
\DP 
 \\
\end{tabular}
\end{center}
 }

The principal cut formula is $A \aand B$ and both immediate subformulas are $\nomj$-labelled formulas (the case in which at least one immediate subformula is an $\cnomm$-labelled formula is analogous to the proof transformation step for $\wdia A$ and it is omitted. The case for $A \aor B$ are similar and they are omitted).
{\small
\begin{center}
\begin{tabular}{ccc}
\negthinspace\negthinspace
\negthinspace\negthinspace
\negthinspace\negthinspace
\negthinspace\negthinspace
\negthinspace\negthinspace
\negthinspace\negthinspace
\negthinspace\negthinspace
\negthinspace\negthinspace
\negthinspace\negthinspace
\negthinspace\negthinspace
\negthinspace\negthinspace
\negthinspace\negthinspace
\bottomAlignProof
\AXC{\!\!\!\!\!\!\!\!\!\!$\!\vdots$ \raisebox{1mm}{$\pi_1$}}
\noLine
\UIC{$\Gamma\vd \nomj\leq A_1,\Delta$}
\AXC{\!\!\!\!\!\!\!\!\!\!\!$\vdots$ \raisebox{1mm}{$\pi_2$}}
\noLine
\UIC{$\Gamma\vd \nomj\leq A_2,\Delta$}
\RL{\fns $\land_S$}
\BIC{$\Gamma \vd \nomj\leq A_1 \aand A_2,\Delta$}
\AXC{$\ \ \ \ \ \ \ \ \ \ \ \ \ \ \ \,  \vdots$ \raisebox{1mm}{$\pi_3$}}
\noLine
\UIC{$\Gamma',\nomj\leq A_i \vd \Delta'$}
\LL{\fns $\aand_P$}
\UIC{$\Gamma', \nomj\leq A_1 \aand A_2 \vd \Delta'$}
\LL{\fns Cut$_{\,\nomj \leq A_1 \aand A_2}$}
\BIC{$\Gamma,\Gamma' \vd \Delta,\Delta'$}
\DP 

& 
\bottomAlignProof
$\rightsquigarrow$
& 

\bottomAlignProof
\AXC{\!\!\!\!\!\!\!\!\!\!\!$\vdots$ \raisebox{1mm}{$\pi_i$}}
\noLine
\UIC{$\Gamma \vd \nomj \leq A_i, \Delta$}
\AXC{$\ \ \ \ \ \ \ \ \ \ \ \ \ \ \ \, \vdots$ \raisebox{1mm}{$\pi_3$}}
\noLine
\UIC{$\Gamma',\nomj\leq A_i \vd \Delta'$}
\LL{\fns Cut$_{\,\nomj \leq A_i}$}
\BIC{$\Gamma,\Gamma' \vd \Delta,\Delta'$}
\DP 
 \\
\end{tabular}
\end{center}
 }

To show that $\mathbf{A.L}\Sigma$ satisfies all the other conditions is immediate and it requires inspecting all the rules in $\mathbf{A.L}\Sigma$.


\end{proof}

\section{Conclusions}
\label{sec:Conclusions}

In the present paper, we have showcased a methodology for introducing labelled calculi for nonclassical logics (LE-logics) in a uniform and principled way, taking the basic normal lattice-based modal logic $\mathbf{L}$ and some of its axiomatic extensions as a case study. This methodology hinges on the use of semantic information to generate calculi which are guaranteed by their design to enjoy a set of basic desirable  properties (soundness, syntactic completeness, conservativity, cut elimination and subformula property). Interestingly, the methodology showcased in the present paper naturally imports  the one developed and applied in \cite{GMPTZ} in the context of proper display calculi to the  proof-theoretic format of labelled calculi. Specifically, just like  the algorithm ALBA, the main tool in unified correspondence theory, was used in \cite{GMPTZ,ChnGrePalTzi21} to generate proper display calculi for basic (D)LE-logics in arbitrary signatures  and their axiomatic extensions defined by analytic inductive axioms,  in the present paper, ALBA is used to generate labelled calculi for $\mathbf{L}$ and 31 of its axiomatic extensions. Similarly to extant labelled calculi in the literature (viz.~those introduced in \cite{negri2005proof}), the language of the calculi introduced in the present paper manipulate a language which properly extends the original language of the logic, and includes {\em labels}. However, the language of  these labels is the same language manipulated by ALBA, the intended interpretation of which is provided by  a suitable {\em algebraic} environment, rather than by a relational one; specifically, by the canonical extensions of the algebras in the class canonically associated with the given logic. Just like the use of canonical extensions as a semantic environment for unified correspondence theory has allowed for the mechanization and  uniform generalization of  correspondence arguments from classical normal modal logic to the much wider setting of normal LE-logics without relying on the availability of any particular relational semantics, this same semantic setting allows for  the uniform generation of labelled calculi for LE-logics in a way that does not rely on a particular relational semantics. However, via general duality theoretic facts, these calculi will be sound also w.r.t.~any relational semantic environment for the given logic, and can also provide a ``blueprint'' for the introduction of labelled calculi designed to capture the logics of specific classes of relational structures (cf.~\cite{ICLA2022relational}). In future work, we will generalize the current results to arbitrary LE-signatures, and establish systematic connections, via formal translations, between proper display calculi and labelled calculi for LE-logics.

\bibliography{ref}
\bibliographystyle{plain}

\begin{appendix}

\section{Proper labelled calculi}
\label{sec:ProperAlgebraicLabelledCalculi}

In what follows we provide a formal definition of the display property and proper labelled calculi.

\begin{definition}[Display]
\label{def:Display}
A nominal $\nomj$ (resp.~conominal $\cnomm$) is always {\em in display} in a labelled formula $\nomj\leq A$ (resp.~$A\leq \cnomm$), and is {\em in display} in a pure structure $t$ iff $t=\nomj \leq \NCT$ (resp.~$t=\NCT \leq \cnomm$) for some term $\NCT$ such that $\nomj$ does not occur in $\NCT$. 

A pure structure $\nomj \leq \NCT$ (resp.~$\NCT \leq \cnomm$) is {\em  in display} in a sequent $s$ if 
 $\nomj$ (resp.~$\cnomm$)  is in display  in each structure of $s$ in which it occurs.

A labelled formula $\nomj \leq A$ (resp.~$A \leq \cnomm$) is {\em  in display} in a sequent $s$ if  $\nomj$ (resp.~$\cnomm$) is in display  in each structure of $s$ in which it occurs.

A proof system enjoys the {\em display property} iff for every derivable sequent $s = \Gamma \vdash \Delta$ and every structure $\sigma \in s$, the sequent $s$ can be equivalently transformed, using the rules of the system, into a sequent $s'$ s.t.~$\sigma$ occurs in display in $s'$ (in this case we might say that $\sigma$ is {\em displayable}).\footnote{Notice that we are not requiring that every meta-structure is displayable.}
\end{definition}
\begin{definition}[Proper labelled calculi]
\label{def:ProperLabelledCalculi}
A proof system is a {\em proper labelled calculus} if it satisfies the following list of conditions:



\noindent \textbf{C$_1$: Preservation of formulas.\;} Each formula occurring in a premise of an inference $r$ is a subformula of some formula in the conclusion of $r$.

\noindent \textbf{C$_2$: Shape-alikeness of parameters and formulas/terms in congruent structures.\;} (i) Congruent parameters are occurrences of the same (meta-)structure (i.e.~instantiations of structure metavariables in the application of a rule $R$ except for switch rules); (ii) instantiations of labelled formulas in the application of switch rules (in the case of $\mathbf{A.L}\Sigma$, occurrences of the form $\nomj \leq C$ and $C \leq \cnomm$) are congruent and the formulas in these occurrences instantiate the same formula metavariable (namely $C$); (iii) instantiations of pure structures in the application of switch rules (in the case of $\mathbf{A.L}\Sigma$, occurrences of the form $\nomj\leq \NCT''$ and $\NCT'' \leq \cnomm$) are congruent and exactly one term in these occurrences instantiate the same term metavariable (namely $\NCT''$).


\noindent \textbf{C$_3$: Non-proliferation of parameters and congruent structures.\;} (i) Each parameter in an inference $r$ is congruent to at most one constituent in the conclusion of $r$. (ii) Each nonparametric structure in the instantiation $r$ of a switch rule $R$ is congruent to at most one nonparametric structure in the conclusion of $r$. 

\noindent \textbf{C$_4$: Position-alikeness of parameters and congruent structures.\;} Congruent parameters and congruent structures occur in the same position (i.e.~either in  precedent position or in  succedent position) in their respective sequents.

\noindent \textbf{C$_5$: Display of principal constituents.\;} If a labelled formula $a$ is principal in the conclusion of an inference $r$, then $a$ is in display. 






\noindent \textbf{C$'_5$: Display-invariance of axioms and structural rules.\;} If a structure $\sigma$ occurs in the conclusion $s$ of a structural rule \ $\AXC{$s_1, \ldots, s_n$}\RL{\fns $R$}\UIC{$s$}\DP$ \ (where $R$ is an axiom whenever the set of premises is empty), then either $\sigma$ occurs in display in $s$, or a structure $\sigma'$ and a sequent $s'$ exist s.t.~$\sigma'$ is in display in $s'$, and $s'$ is derivable from $s$ via application of switch and adjunction rules only, and $\sigma$ and $\sigma'$ are congruent in this derivation. Moreover, if the rule $R$ is an axiom, then \ \AXC{ }\RL{\fns $R'$}\UIC{$s'$}\DP \ is an axiom of the calculus as well.

\noindent \textbf{C$_6$: Closure under substitution for succedent parts.\;} Each rule is closed under simultaneous substitution of (sets of) arbitrary structures for congruent labelled formulas occurring in succedent position.

\noindent \textbf{C$_7$: Closure under substitution for precedent parts.\;} Each rule is closed under simultaneous substitution of (sets of) arbitrary structures for congruent labelled formulas occurring in precedent position.

\noindent where $\osigma$ is a multi-set of structures and $\osigma \slash a$ means that $\osigma$ are substituted for $a$.

\noindent This condition caters for the step in the cut-elimination procedure in which the cut needs to be ``pushed up'' over rules in which the cut-formula in succedent position  is parametric.

\noindent \textbf{C$_8$: Eliminability of matching principal constituents.\;}
This condition requests a standard Gentzen-style checking, which is now limited to the case in which  both cut formulas are {\em principal}, i.e.~each of them has been introduced with the last rule application of each corresponding subdeduction. In this case, analogously to the proof Gentzen-style, condition C$_8$ requires being able to transform the given deduction into a deduction with the same conclusion in which either the cut is eliminated altogether, or is transformed in one or more applications of the cut rule(s), involving proper subformulas of the original cut-formula.

\end{definition}

We now provide the proof of Theorem \ref{thm:MetaCutElimination} stated in Section \ref{CutEliminationAndSubformulaProperty}.

\begin{proof} 
The proof is close to the proofs in \cite{Multitype} and \cite[Section 3.3, Appendix A]{Wan02}. As to the principal move (i.e.~both labelled cut formulas are principal), condition C$_8$ guarantees that this cut application can be eliminated.  
As to the parametric moves (i.e.~at least one labelled cut formula is parametric), we are in the following situation:

{\footnotesize{
\begin{center}
\AXC{\ \ \ $\vdots$ \raisebox{1mm}{$\pi_1$}}
\noLine
\def\fCenter{\vdash}
\UI$(\Pi \fCenter \Sigma)[a]^{suc}$
\AXC{\ \ \ \ \ $\vdots$ \raisebox{1mm}{$\pi_{2.1}$}}
\noLine
\UI$(\Gamma_1 \fCenter \Delta_1) [a_{u_1}]^{pre}$
\AXC{$\cdots{\phantom{\vdash}}$}
\AXC{\ \ \ \ \ $\vdots$ \raisebox{1mm}{$\pi_{2.n}$}}
\noLine
\UI$(\Gamma_n \fCenter \Delta_n) [a_{u_n}]^{pre}$
\noLine
\TIC{\ \,$\ddots\vdots\iddots$ \raisebox{0.3mm}{$\pi_2$}}
\noLine
\UI$(\Gamma \fCenter \Delta) [a]^{pre}$
\RL{\fns Cut}
\BI$\Pi, \Gamma \fCenter \Sigma, \Delta$
\DP
\end{center}
}}

\noindent where we assume that the cut labelled formula $a$ is parametric in the conclusion of $\pi_2$ (the other case is symmetric), and $(\Gamma \vdash \Delta)[a]^{pre}$ (resp.~$(\Pi \vdash \Sigma)[a]^{suc}$) means that $a$ occur in precedent (resp.~succedent) position in $\Gamma \vdash \Delta$ (resp.~$\Pi \vdash \Sigma$). 

Conditions C$_2$-C$_4$ make it possible to follow the history of that occurrence of $a$, since these conditions enforce that the history takes the shape of a tree, of which we consider each leaf. Let $a_{u_i}$ (abbreviated to $a_u$ from now on) be one such uppermost-occurrence in the history-tree of the parametric cut term $a$ occurring in $\pi_2$, and let $\pi_{2.i}$ be the subderivation ending in the sequent $\Gamma_i \vdash \Delta_i$, in which $a_u$ is introduced. 

Wansing's parametric case (1) splits into two subcases: (1a) $a_u$ is introduced in display; (1b) $a_u$ is not introduced in display. Condition C$'_5$ guarantees that (1b) can only be the case when $a_u$ has been introduced via an axiom. 

If (1a), then we can perform the following transformation:

{\footnotesize{
\begin{center}
\begin{tabular}{lcr}
\bottomAlignProof
\AXC{\ \ \ \,$\vdots$ \raisebox{1mm}{$\pi_1$}}
\noLine
\UI$(\Pi \fCenter \Sigma)[a]^{suc}$
\AXC{\ \ \ \ \,$\vdots$ \raisebox{1mm}{$\pi_{2.i}$}}
\noLine
\UI$(\Gamma_i \fCenter \Delta_i)[a_u]^{pre}$
\noLine
\UIC{\ \ \ \,$\vdots$ \raisebox{1mm}{$\pi_2$}}
\noLine
\UI$(\Gamma \fCenter \Delta) [a]^{pre}$
\RL{\fns Cut}
\BI$\Pi, \Gamma \fCenter \Sigma, \Delta$
\DisplayProof

 & \ \ \ $\rightsquigarrow$ \!\!\!\!\!&

\bottomAlignProof
\AXC{\ \ \ $\vdots$ \raisebox{1mm}{$\pi_1$}}
\noLine
\UI$(\Pi \fCenter \Sigma)[a]^{suc}$
\AXC{\ \ \ \ \ $\vdots$ \raisebox{1mm}{$\pi_{2.i}$}}
\noLine
\UI$(\Gamma_i \fCenter \Delta_i)[a_u]^{pre}$
\RL{\fns Cut'}
\BI$\Pi, \Gamma_i \fCenter \Sigma, \Delta_i$
\noLine
\UIC{\ \ \ \ \ \ \ \ \ \ \ \ \ \ \ \ \ \ \ \ \,$\vdots$ \raisebox{1mm}{$\pi_2 \,[\{\Pi, \Sigma\} \slash a]$}}
\noLine
\UI$\Pi, \Gamma \fCenter \Sigma, \Delta$
\DisplayProof
 \\
\end{tabular}
\end{center}
}}

\noindent where $\pi_2 \,[\{\Pi, \Sigma\} \slash a]$ is the derivation obtained by $\pi_2$ by substituting $\Pi, \Sigma$ for $a$ in $\pi_2$.\footnote{Notice that the writing $\pi_2 \,[\{\Pi, \Sigma\} \slash a]$ does not mean that $\Pi$ and $\Sigma$ remain untouched in $\pi_2$, namely it does not mean that every sequent in $\pi_2$ is of the form $\Pi, \Gamma'' \vdash \Sigma, \Delta''$ for some $\Gamma'', \Delta''$. Indeed, structures in $\Pi$ or in $\Sigma$ might play the role of active structures in some applications of switch rules occurring in $\pi_2$, if any.} Notice that the assumption that $a$ is parametric in the conclusion of $\pi_2$ and that $a_u$ is principal implies that $\pi_2$ has more than one node, and hence the transformation above results in a cut application of strictly lower height. Moreover, condition C$_7$ implies that Cut' is well defined and the substitution of $\{\Pi, \Sigma\}$ for $a$ in $\pi_2$ gives rise to an admissible derivation $\pi_2 \,[\{\Pi, \Sigma\}/a]^{pre}$ in the calculus (use C$_6$ for the symmetric case).
If (1b), i.e.\ if $a_u$ is the principal formula of an axiom, the situation is illustrated below in the derivation on the left-hand side:

{\footnotesize{
\begin{center}
\begin{tabular}{lcr}
\bottomAlignProof
\AXC{\ \ \ $\vdots$ \raisebox{1mm}{$\pi_1$}}
\noLine
\UIC{$(\Pi \fCenter \Sigma)[a]^{suc}$}
%
\AXC{$(\Gamma_i \fCenter \Delta_i) [a_u]^{pre}$}
\noLine
\UIC{\ \ \ $\vdots$ \raisebox{1mm}{$\pi_2$}}
\noLine
\UI$(\Gamma \fCenter \Delta) [a]^{pre}$
\RL{\fns Cut}
\BI$(\Pi, \Gamma \fCenter \Sigma, \Delta$
\DP

 & \ \ \ \ $\rightsquigarrow$ \!\!\!\!\!\!&

\bottomAlignProof
\AXC{\ \ \,$\vdots$ \raisebox{1mm}{$\pi_1$}}
\noLine
\UIC{$(\Pi \fCenter \Sigma)[a]^{suc}$}
%
\AXC{$(\Gamma'_i \fCenter \Delta'_i) [a_u]^{pre}$}
\RL{\fns Cut'}
\BI$\Pi, \Gamma' \fCenter \Sigma, \Delta'$
\noLine
\UIC{\ \ \ $\vdots$ \raisebox{1mm}{$\pi'$}}
\noLine
\UI$(\Gamma_i \fCenter \Delta_i) [\{\Pi, \Sigma\}/a]$
\noLine
\UIC{\ \ \ \ \ \ \ \ \ \ \ \ \ \ \ \ \ \ \ \, $\vdots$ \raisebox{1mm}{$\pi_2 \,[\{\Pi,\Sigma\}/a]$}}
\noLine
\UI$\Pi, \Gamma \fCenter \Sigma, \Delta$
\DP
 \\
\end{tabular}
\end{center}
}}

\noindent where $(\Gamma_i \fCenter \Delta_i) [a_u]^{pre}[a]^{suc}$ is an axiom. Then, condition C$'_5$ implies that some sequent $(\Gamma'_i \fCenter \Delta'_i) [a_u]^{pre}[a]^{suc}$ exists, which is display-equivalent to the first axiom, and in which $a_u$ occurs in display. This new sequent can be either identical to $(\Gamma_i \fCenter \Delta_i) [a_u]^{pre}[a]^{suc}$, in which case we proceed as in case (1a), or it can be different, in which case, condition C$'_5$ guarantees that it  is an axiom as well. Further, if $\pi$ is the derivation consisting of applications of adjunction and switch rules which transform the latter axiom into the former, then let $\pi' = \pi \,[\{\Pi,\Sigma\}/a_u]$. As discussed when treating (1a), the assumptions imply that $\pi_2$ has more than one node, so the transformation described above results in a cut application of strictly lower height. 
 Moreover, condition C$_7$ implies that Cut' is well defined and substituting $\{\Pi, \Sigma\}$ for $a_u$ in $\pi_2$ and in $\pi$ gives rise to admissible derivations $\pi_2 \,[\{\Pi, \Sigma\}/a_u]$ and $\pi'$ in the calculus (use C$_6$ for the symmetric case).

As to Wansing's case (2), assume that $a_u$ has been introduced as a parameter in the conclusion of $\pi_{2.i}$ by an application $r$ of the rule $R$. 

Therefore, the transformation below yields a derivation where $\pi_1$ is not used at all and the cut is not applied.

{\footnotesize{
\begin{center}
\begin{tabular}{rcl}
\bottomAlignProof
\AXC{\ \ \ \,$\vdots$ \raisebox{1mm}{$\pi_1$}}
\noLine
\UI$(\Pi \fCenter \Sigma)[a]^{suc}$
\AXC{\ \ \ \ \ \,$\vdots$ \raisebox{1mm}{$\pi_{2.i}$}}
\noLine
\UI$(\Gamma_i \fCenter \Delta_i) [a_u]^{pre}$
\noLine
\UIC{\ \ \ \ $\vdots$ \raisebox{1mm}{$\pi_2$}}
\noLine
\UI$(\Gamma \fCenter \Delta) [a]^{pre}$
\RL{\fns Cut}
\BI$\Pi, \Gamma \fCenter \Sigma, \Delta$
\DisplayProof

 & \ \ \ \ $\rightsquigarrow$ \!\!\!\!\!\!\!\!\!\!\!\!\!\!&

\bottomAlignProof
\AXC{\ \ \ \ \ $\vdots$ \raisebox{1mm}{$\pi_{2.i}'$}}
\noLine
\UI$(\Gamma_i \fCenter \Delta_i)[\{\Pi, \Sigma\}/a_u]$
\noLine
\UIC{\ \ \ \ \ \ \ \ \ \ \ \ \ \ \ \ \ \ \ \, $\vdots$ \raisebox{1mm}{$\pi_2 \,[\{\Pi, \Sigma\}/a]$}}
\noLine
\UI$\Pi, \Gamma \fCenter \Sigma, \Delta$
\DP
\end{tabular}
\end{center}
}}

From this point on, the proof proceeds like in \cite{Wan02}. 

\end{proof}




\end{appendix}

\end{document}